\newtheorem{thm}{Theorem}[section]
\newtheorem{prop}[thm]{Proposition}
\newtheorem{lem}[thm]{Lemma}
\newtheorem{cor}[thm]{Corollary}
\newtheorem*{thm*}{Theorem}
\newtheorem*{cor*}{Corollary}
\newtheorem{quest}[thm]{Question}
\theoremstyle{definition}
\newtheorem{dfn}[thm]{Definition}
\newtheorem{example}[thm]{Example}
\newtheorem{rem}[thm]{Remark}
\theoremstyle{remark}
\numberwithin{equation}{section}
\newcommand{\Hom}{\operatorname{Hom}} 
\newcommand{\rank}{\operatorname{rank}} 
\newcommand{\GL}{\operatorname{{\mathbf GL}}}
\newcommand{\tr}{\operatorname{tr}}
\newcommand{\cC}{{\mathcal C}}
\newcommand{\cF}{{\mathcal F}}
\newcommand{\cG}{{\mathcal G}}
\newcommand{\cH}{{\mathcal H}}
\newcommand{\cI}{{\mathcal I}}
\newcommand{\cJ}{{\mathcal J}}
\newcommand{\cK}{{\mathcal K}}
\newcommand{\cL}{{\mathcal L}}
\newcommand{\cM}{{\mathcal M}}
\newcommand{\cN}{{\mathcal N}}
\newcommand{\cP}{{\mathcal P}}
\newcommand{\cQ}{{\mathcal Q}}
\newcommand{\cR}{{\mathcal R}}
\newcommand{\fA}{{\mathfrak A}}
\newcommand{\fB}{{\mathfrak B}}
\newcommand{\fC}{{\mathfrak C}}
\newcommand{\fc}{{\mathfrak c}}
\newcommand{\fm}{{\mathfrak m}}
\newcommand{\fp}{{\mathfrak p}}
\newcommand{\fq}{{\mathfrak q}}
\newcommand{\A}{{\mathbb A}}
\newcommand{\C}{{\mathbb C}}
\newcommand{\R}{{\mathbb R}}
\newcommand{\pp}{\mathbb{P}}
\newcommand{\N}{{\mathbb N}}
\newcommand{\Z}{{\mathbb Z}}
\newcommand{\T}{{\mathbb T}}
\newcommand{\D}{{\mathbb D}}
\newcommand{\B}{{\mathbb B}}
\newcommand{\ugrp}{\left(\cP_d/\fc\right)^{\times}}
\newcommand{\augrp}{\left(A/\fc\right)^{\times}}
\newcommand{\Pic}{\operatorname{Pic}}
\def\mcc{M\raise.5ex\hbox{c}C}
\def\mccarthy{M\raise.5ex\hbox{c}Carthy}
\begin{document}

\title{Nevanlinna-Pick Families and Singular Rational Varieties}

\author{Kenneth R. Davidson}
\address{Pure Math.\ Dept., University of Waterloo, Waterloo, ON, N2L 3G1, Canada}
\email{krdavidson@uwaterloo.ca}

\author{Eli Shamovich}
\email{eshamovich@uwaterloo.ca}

\begin{abstract}
The goal of this note is to apply ideas from commutative algebra (a.k.a. affine algebraic geometry) to the question of constrained Nevanlinna-Pick interpolation. More precisely, we consider subalgebras $A \subset \C[z_1,\ldots,z_d]$, such that the map from the affine space to the spectrum of $A$ is an isomorphism except for finitely many points. Letting $\fA$ be the weak-$*$ closure of $A$ in $\cM_d$ -- the multiplier algebra of the Drury-Arveson space. We provide a parametrization for the Nevanlinna-Pick family of $M_k(\fA)$ for $k \geq 1$. In particular, when $k=1$ the parameter space for the Nevanlinna-Pick family is the Picard group of $A$. 
\end{abstract}

\maketitle

\newcommand{\symball}{\widetilde{\fB_d}}
\newcommand{\PU}{\operatorname{PU}}
\newcommand{\sat}{\operatorname{sat}}

\section{Introduction}

Let $\D$ denote the unit disc in the complex plane. The Nevanlinna-Pick interpolation problem is to find an analytic function $f \colon \D \to \D$, such that at the prescribed set of points $F = \left\{z_1,\ldots,z_n\right\} \subset \D$, the function $f$ attains some prescribed values, namely $f(z_j) = w_j$, with $w_1,\ldots,w_n$ given. A clean and elegant solution for this problem was obtained by Pick \cite{Pick16} and Nevanlinna \cite{Nev19,Nev29}. Such a function $f$ exists if and only if the Pick matrix $\left[ \frac{1 - w_i \overline{w_j}}{1 - z_i \overline{z_j}} \right]_{i,j =1}^n$ is positive semi-definite. In fact, the same still holds if one replaces the scalars $w_i$ with matrices.

In \cite{Sar67} Sarason gave an operator theoretic interpretation to the Nevanlinna-Pick interpolation problem. He observed that the problem can be formulated in terms of a commutant lifting problem on a subspace of the Hardy space $H^2 = H^2(\D)$. Note that the expression $k(z,w) = \frac{1}{1 - z \bar{w}}$ that appears in the Pick matrix is in fact the Szego kernel, the reproducing kernel of $H^2$. The multiplier algebra of $H^2$, namely the algebra of all functions $f$ such that  $fH^2 \subset H^2$, turns out to be $H^{\infty}$, the algebra of all bounded functions on the disc. Let $B_F$ be the Blaschke product that vanishes on $F$. It is a bounded function and thus a multiplier on $H^2$. Furthermore, multiplication by $B_F$ is an isometry on $H^2$.  Thus we can consider the subspace $M_F = H^2 \ominus B_F H^2$. Let $P_{M_F}$ be the orthogonal projection on $M_F$. For every $f \in H^{\infty}$, let $M_f$ be the multiplication operator defined by $f$ on $H^2$. Let $\cI_F \subset H^{\infty}$ be the ideal of functions that vanish on $F$. Sarason showed that the map $f \mapsto P_{M_F} M_f P_{M_F}$ induces an isometry on $H^{\infty}/\cI_F$.

Abrahamse in \cite{Abr79} replaced the disc with a multiply connected domain in $\Omega \subset \C$. He showed that in this case one cannot consider a single reproducing kernel Hilbert space, but one must consider a family of kernels parametrized by the torus $\R^g/\Z^g$, where the connectivity of $\Omega$ is $g+1$. Each point of the parameter space corresponds to a character $\chi$ of the fundamental group $\pi_1(\Omega)$ and we associate to $\chi$ the function space of $\chi$-automorphic functions on $\Omega$. These spaces were considered in the case of the annulus by Sarason in \cite{Sar65}. The work of Abrahamse generated a lot of interest (see for example \cite{BalCla96, VinFed98} for the treatment of the question of which subspaces of the parameter space are sufficient for a particular interpolation datum and what subspaces are sufficient for all data). The term Nevanlinna-Pick family was coined to describe the situation where a family of kernels is sufficient to solve the Nevanlinna-Pick problem.

The first author, Paulsen, Raghupathi and Singh in \cite{DPRS09} observed that Nevanlinna-Pick families arise naturally in the setting of the disc, provided one considers a constrained interpolation question. They  consider $H^{\infty}_1$,  the algebra of all bounded analytic functions on the disc with vanishing derivative at the origin. They prove that there is a Nevanlinna-Pick family  parametrized by the projective $2$-sphere. Following their work, Raghupathi generalized their results to the case of algebras of the form $\C 1 + B H^{\infty}$, where $B$ is a Blaschke product \cite{Rag09}, and proved that constrained interpolation on the disc combined with an action of a Fuchsian group yields Abrahamse's result in \cite{Rag09-2}. In \cite{BBtH10} Ball, Bolotnikov and ter Horst have generalized the result of \cite{DPRS09} to the matrix-valued case and showed that one can parametrize the Nevanlinna-Pick families for $M_k(H^{\infty}_1)$ by a disjoint union of Grassmannians. A dual parametrization using test functions was provided by Dritschel and Pickering \cite{DriPic12} in the case of the Neil parabola and much more generally by Dritschel and Undrakh \cite{DriUnd18}. The former was used by Dritschel, Jury and McCullough \cite{DJM16} to study rational dilations on the Neil parabola.

Finally, the first author jointly with Hamilton in \cite{DavHam11} used the predual factorization property $\A_1(1)$ to show that every weak-$*$ closed subalgebra $\fA$ of the multiplier algebra of a complete Nevanlinna-Pick space has a Nevanlinna-Pick family. In particular, they have considered the algebra $\cM_d$ of multipliers on the Drury-Arveson space $H^2_d$. The Drury-Arveson space is a complete Nevanlinna-Pick space by \cite{DavPit98b, AglMcC00}. Arias and Popescu \cite{AriPop00} show that $\cM_d$ has the property $\A_1(1)$.  However, the parametrization in \cite{DavHam11} given for the Nevanlinna-Pick family of a weak-$*$ closed subalgebra $\fA \subset \cM_d$ is by all $\cM_d$-cyclic vectors in $H^2_d$.

The goal of this note is to provide a concrete description for the parameters of Nevanlinna-Pick families for a certain class of weak-$*$ closed subalgebras of $\cM_d$. Note that the algebra $H^{\infty}_1$ is generated by $z^2$ and $z^3$ and can be viewed as the pullback of bounded analytic functions on the cuspidal cubic (also known as the Neil parabola) in $\D^2$. Cusp algebras such as this one were considered first by Agler and McCarthy in \cite{AglMcC09}. Let us consider algebras $A \subset \C[z_1,\ldots,z_d] = \cP_d$, such that the induced map from the affine space to the spectrum of $A$ is a biholomorphism outside finitely many points. The key idea is to consider the largest common ideal in $\fc \subset A \subset \cP_d$, called the conductor ideal (see Section \ref{sec:conductor} for the definition). By analogy to \cite{Abr79}, we want to consider line (and more generally vector) bundles on the spectrum of $A$. Fortunately, we can describe them rather easily by considering the free $A/\fc$-submodules $M \subset \left(\cP_d/\fc\right)^{\oplus k}$, such that $M \otimes \left( \cP_d/\fc\right) \cong \left(\cP_d/\fc\right)^{\oplus k}$ (compare to \cite{BDG01}). We set $\fA \subset \cM_d$ to be the weak-$*$ closure of $A$. 

In Section \ref{sec:conductor} we will collect some necessary information about the conductor ideal in the topological setting. In Section \ref{sec:scalar_case} we show that the parameter space for the Nevanlinna-Pick family is the Picard group of $A$, i.e, the group of isomorphism classes of line bundles on the spectrum of $A$. In Section \ref{sec:matrix_case} we will prove that $M_k(\cM_d)$ has the property $\A_1(1)$ acting on $M_k(H^2_d)$ by transposed right multiplication and provide a description of the parameter space for the Nevanlinna-Pick families of $M_k(\fA)$ for $k \geq 2$. Lastly, in Section \ref{sec:a11_mat} we prove the $M_6(\cM_d)$ does not have $\A_1(1)$ acting on $\cH^2_d \otimes \C^6$. This implies that one cannot avoid considering vector bundles on the singular variety if one wants to do constrained matrix-valued Nevanlinna-Pick interpolation.

\section{The Conductor Ideal} \label{sec:conductor}

The idea of conductor ideal is quite old in commutative algebra and algebraic number theory. Let $A \subset B$ be two rings. Then the conductor of $A$ in $B$ is the ideal of elements $a \in A$, such that $a B \subset A$. Alternatively, the conductor is the annihilator of the $A$-module $B/A$. It turns out that the conductor is an ideal of $B$ and it is the largest ideal common to $A$ and $B$. 

We are going to discuss topological algebras, hence we make these defintions in our setting. All algebras considered in this section are unital and commutative. Let $\fA \subset \fB \subset B(\cH)$ be two weak-$*$ closed operator algebras. We assume that $\fA$ is a closed subalgebra of $\fB$. 

\begin{dfn} \label{dfn:conductor}
The conductor ideal is the ideal $\fc \subset \fA$  defined by  
\[ \fc = \{ f \in \fA \mid f \fB \subset \fA \} . \]
\end{dfn} 

For every $g \in \fB$ and every $f \in \fc$ we have  $f g \fB \subset f \fB \subset \fA$. Thus $\fc$ is an ideal of $\fB$. Furthermore, it is clearly the largest ideal with this property.

\begin{lem} \label{lem:cond_sq}
Let $\fA \subset \fB$ be as above and let $\fc$ be the conductor. Then $\fc$ is weak-$*$ closed and we have the commutative diagram 
\begin{equation} \label{eq:conductor_square}
\xymatrix{\fA \ar[r] \ar[d]_{\pi} & \fB \ar[d]^{\pi} \\ \fA/\fc \ar[r] & \fB/\fc.}
\end{equation}
The vertical arrows are quotient maps and the horizontal ones are embeddings. Furthermore, $\fA = \pi^{-1}(\fA/\fc)$ as a subspace of $\fB$.
\end{lem}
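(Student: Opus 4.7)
The proof is essentially a routine verification, with three separate claims to establish. The main (and only) non-formal content lies in showing weak-$*$ closedness.

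\textbf{Step 1 (weak-$*$ closedness of $\fc$).} The plan is to write the conductor as an intersection of weak-$*$ closed sets. For each $b \in \fB$, let $R_b \colon B(\cH) \to B(\cH)$ denote right multiplication by $b$. Since the weak-$*$ topology on $B(\cH)$ coincides with the ultraweak topology, and $\tr(S\,R_b(T)) = \tr(bST)$ for every trace-class operator $S$, the map $R_b$ is weak-$*$ continuous. Because $\fA$ is weak-$*$ closed by hypothesis, the set $R_b^{-1}(\fA) = \{f \in B(\cH) \mid fb \in \fA\}$ is weak-$*$ closed. Now rewrite
\[ \fc = \fA \cap \bigcap_{b \in \fB} R_b^{-1}(\fA), \]
which exhibits $\fc$ as an intersection of weak-$*$ closed sets, hence weak-$*$ closed.

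\textbf{Step 2 (the diagram).} Since $\fc$ is a two-sided ideal of $\fB$ (shown in the paragraph following Definition \ref{dfn:conductor}), and $\fc \subset \fA$, the quotients $\fA/\fc$ and $\fB/\fc$ both make sense, and the quotient maps $\pi$ are the canonical projections. The horizontal map $\fA \hookrightarrow \fB$ is the inclusion, and the induced map $\fA/\fc \to \fB/\fc$ is well-defined because $\pi$ on $\fB$ restricted to $\fA$ kills exactly the elements of $\fA \cap \fc = \fc$. This last observation also shows injectivity: if $a \in \fA$ maps to $0$ in $\fB/\fc$ then $a \in \fc$, so $a$ is already zero in $\fA/\fc$. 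Commutativity of the square is immediate.

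\textbf{Step 3 (identification $\fA = \pi^{-1}(\fA/\fc)$).} The inclusion $\fA \subset \pi^{-1}(\fA/\fc)$ is immediate since for $a \in \fA$ we have $\pi(a) \in \fA/\fc$ viewed inside $\fB/\fc$. For the reverse inclusion, suppose $b \in \fB$ satisfies $\pi(b) \in \fA/\fc$, i.e.\ there exists $a \in \fA$ with $\pi(a) = \pi(b)$ in $\fB/\fc$. Then $b - a \in \fc \subset \fA$, and since $a \in \fA$ we conclude $b \in \fA$, as required.

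The only step with any substance is Step 1, and there the potential obstacle is continuity of right multiplication in the weak-$*$ topology; this is handled by the standard trace-class duality identification of the weak-$*$ topology on $B(\cH)$. Steps 2 and 3 are formal diagram chases.
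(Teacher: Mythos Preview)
Your proof is correct and follows essentially the same approach as the paper: both arguments for weak-$*$ closedness rest on the separate weak-$*$ continuity of multiplication (you justify it via trace-class duality and phrase it as an intersection of closed preimages, the paper invokes it directly via a net argument), and your Step~3 is identical to the paper's.
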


\begin{proof}
Let $a_{\alpha} \to a$ be a weak-$*$ convergent net, with $a_{\alpha} \in \fc$. Since multiplication is separately weak-$*$ continuous,  for every $b \in \fB$, $a_{\alpha} b \to a b$. Since $A$ is closed $a b \in \fA$. Hence $a \in \fc$.

Now it is clear that $\fA \subset \pi^{-1}(\fA/\fc)$, so let $f \in \pi^{-1}(A/\fc)$. Hence there exists $a \in \fA$, such that $f - a \in \fc$, but $\fc \subset A$ and we are done.
\end{proof}

\newcommand{\lat}{\operatorname{Lat}}

Recall that $\lat(\fA)$ denotes the lattice of $\fA$-invariant subspaces of $\cH$. Let us write $\cC = \overline{\fc \cH}$. It is clear that $\cC$ is both $\fA$ and $\fB$-invariant subspace. Hence, the compression map $\fB \to B(\cC^{\perp})$ is a completely contractive and weak-$*$ continuous homomorphism. The kernel of this map contains $\fc$ and thus makes $\cC^{\perp}$ into a $\fB/\fc$-module (and also an $\fA/\fc$-module).

\begin{dfn} \label{dfn:H_M}
Let $\cN \subset \cC^{\perp}$ be an $\fA/\fc$-invariant subspace. We define $\cH_{\cN} = \cN \oplus \cC$. It is clear that $\cH_{\cN} \in \lat(\fA)$.
\end{dfn}

Let us assume that the kernel of the compression to $\cC^{\perp}$ is precisely $\fc$. This is valid for $\cM_d$, the multiplier algebra of Drury-Arveson space as the following lemma shows.

\begin{lem} \label{lem:DA compression}
If $\cJ$ is an ideal of $\cM_d$, the multiplier algebra of Drury-Arveson space, then compression to $\overline{\cI H^2_d}^\perp$ is completely isometrically isomorphic to $\cM_d/\cJ$.
\end{lem}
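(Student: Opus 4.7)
The plan is to set $\cC = \overline{\cJ H^2_d}$ and observe that, since $\cJ$ is an ideal, $\cC$ is $\cM_d$-invariant. Writing elements of $\cM_d$ in block upper-triangular form with respect to $H^2_d = \cC \oplus \cC^\perp$, the compression
\[
\phi : \cM_d \to B(\cC^\perp), \qquad \phi(M_f) = P_{\cC^\perp} M_f \big|_{\cC^\perp},
\]
is a unital, weak-$*$ continuous, completely contractive algebra homomorphism. There are then two things to prove: that $\ker \phi = \cJ$, and that the induced map $\widetilde\phi : \cM_d/\cJ \to B(\cC^\perp)$ is a complete isometry.

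For the kernel identification, the inclusion $\cJ \subset \ker\phi$ is immediate, since $f \in \cJ$ gives $f H^2_d \subset \cJ H^2_d \subset \cC$. Conversely, if $M_f \in \ker\phi$, then $f H^2_d \subset \cC$, and taking the image of the constant function $1$ shows $f \in \cC$. I would then invoke the fact that for every weak-$*$ closed ideal $\cJ \subset \cM_d$ one has
\[
\cJ = \{ M_f \in \cM_d \mid f H^2_d \subset \overline{\cJ H^2_d}\},
\]
a structural result for invariant subspaces in the Drury-Arveson space that descends from the Davidson--Pitts analysis of the non-commutative analytic Toeplitz algebra together with the complete Nevanlinna--Pick property of $H^2_d$ (see also McCullough--Trent and Arias--Popescu).

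For the complete isometry, given $F = [f_{ij}] \in M_n(\cM_d)$ and $T := \phi^{(n)}(F) \in B(\cC^\perp \otimes \C^n)$, I would appeal to the commutant lifting theorem for the Drury--Arveson space (Ball--Trent--Vinnikov, Popescu): $T$ commutes with the compressed coordinate multipliers, hence lifts to some $G \in M_n(\cM_d)$ with $\phi^{(n)}(G) = T$ and $\|G\| = \|T\|$. Since $F - G \in M_n(\ker\phi) = M_n(\cJ)$, we obtain
\[
\| [F] \|_{M_n(\cM_d/\cJ)} \leq \|G\| = \|\phi^{(n)}(F)\|,
\]
and the reverse inequality holds by complete contractivity of $\phi$. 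This gives the complete isometry at every level.

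The main obstacle is the kernel identification: the passage from "$f$ lies in the invariant subspace $\overline{\cJ H^2_d}$" to "$f$ lies in the weak-$*$ closed ideal $\cJ$" is nontrivial and is where the specific structure of $H^2_d$ (beyond what is available for a generic complete Nevanlinna--Pick space) must enter. Everything else is formal: the compression is routinely a weak-$*$ continuous completely contractive homomorphism, and the completely isometric quotient statement is a packaging of the commutant lifting theorem.
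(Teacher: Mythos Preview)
Your argument is correct, but it takes a different route from the paper's proof. The paper does not split the problem into kernel identification plus commutant lifting; instead it invokes a single black-box result, \cite[Theorem~2.1]{DavPit98a}, which says that for any weak-$*$ closed ideal $\cI\subset\cL_d$ the compression of $\cL_d$ to $\overline{\cI\cF^2_d}^\perp$ has kernel exactly $\cI$ and is completely isometric on the quotient. The paper then simply pulls $\cJ$ back to an ideal of $\cL_d$ via the symmetrization quotient $\cL_d\to\cM_d$ and reads off the conclusion for $\cM_d$.

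Your approach unpacks the same ingredients directly in $\cM_d$: you use Drury--Arveson commutant lifting (Ball--Trent--Vinnikov, Popescu) for the completely isometric part, and for the kernel you invoke the structural identity $\cJ=\{f\in\cM_d: fH^2_d\subset\overline{\cJ H^2_d}\}$, correctly flagging this as the nontrivial step and correctly attributing it to the Davidson--Pitts machinery. In effect both proofs rest on the same free-semigroup-algebra input; the paper cites it once at the level of $\cL_d$, while you cite its two consequences separately at the level of $\cM_d$. Your version has the advantage of making explicit which half is formal (the distance formula via commutant lifting) and which half genuinely needs the special structure of $H^2_d$; the paper's version is shorter and avoids re-deriving the distance formula.
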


\begin{proof}
We use the fact that $\cM_d$ is equal to the compression of the non-commutative analytic Toeplitz algebra $\cL_d$ (acting on full Fock space $\cF^2_d$) to symmetric Fock space, which is the orthogonal complement of the range of the commutator ideal $\cC$.
This identification is a special case of \cite[Theorem 2.1]{DavPit98a}.
This result shows that if $\cI \subset \cL_d$ is a weak-$*$ closed ideal, then the kernel of the compression onto the complement of its range is precisely $\cI$ and the quotient is completely isometric isomorphic to $\cL_d/\cI$. 
The corresponding result for $\cM_d$ holds for weak-$*$ closed ideals $\cJ$ of $\cM_d$ by applying the result to the preimage of $\cJ$ in $\cL_d$.
\end{proof}

Note that if $\xi \in \cH$ is $\fB$-cyclic, then $\cC = \overline{\fc \xi}$ and the cyclic module $\overline{\fA \xi} = \overline{\fA/\fc} P_{\cC^{\perp}} \xi \oplus \cC$, where $P_{\cC^{\perp}}$ is the orthogonal projection onto $\cC^{\perp}$. We record this in the following proposition.

\begin{prop} \label{prop:cyclic}
Assume that $\fc$ is precisely the kernel of the compression of $\fB$ to $\cC^{\perp}$. Then the cyclic modules $\fA \xi$, where $\xi$ is $\fB$-cyclic are in one-to-one correspondence with cyclic $\fA/\fc$-submodules of $\cC^{\perp}$.
\end{prop}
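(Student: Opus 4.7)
The plan is to set up the bijection explicitly as the mutually inverse maps
\[
\overline{\fA\xi}\;\longmapsto\;\overline{\fA\xi}\cap\cC^\perp,
\qquad
\cN\;\longmapsto\;\cH_{\cN}=\cN\oplus\cC,
\]
and to verify that each of these is well-defined, i.e.\ that for every $\fB$-cyclic $\xi$ the cyclic $\fA$-module $\overline{\fA\xi}$ actually splits as an orthogonal direct sum of the form $\cN\oplus\cC$ with $\cN$ a cyclic $\fA/\fc$-submodule of $\cC^\perp$.

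First I would verify the claim made immediately before the proposition: if $\xi$ is $\fB$-cyclic, then $\cC=\overline{\fc\xi}$. The inclusion $\overline{\fc\xi}\subset\cC$ is immediate. For the reverse, every vector of $\fc\cH$ has the form $fv$ with $f\in\fc$ and $v\in\cH$, and $\fB$-cyclicity lets me approximate $v$ by $b_n\xi$ with $b_n\in\fB$; since $\fc$ is an ideal of $\fB$, $fb_n\in\fc$, and separate weak-$*$ (in fact norm) continuity of multiplication forces $fv\in\overline{\fc\xi}$. Hence $\fc\cH\subset\overline{\fc\xi}$, which gives $\cC\subset\overline{\fc\xi}$.

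Since $\fc\subset\fA$, the first step already yields $\cC\subset\overline{\fA\xi}$, so
\[
\overline{\fA\xi}\;=\;\bigl(\overline{\fA\xi}\cap\cC^\perp\bigr)\oplus\cC.
\]
Now write $\eta=P_{\cC^\perp}\xi$ and $\zeta=P_\cC\xi$. Invariance of $\cC$ under $\fA$ gives $a\zeta\in\cC$, and by the standing hypothesis the compression $\bar a:=P_{\cC^\perp}a\bigr|_{\cC^\perp}$ is exactly the image of $a$ in $\fA/\fc\subset B(\cC^\perp)$; hence $P_{\cC^\perp}(a\xi)=\bar a\,\eta$. Using continuity of $P_{\cC^\perp}$ I then identify
\[
\overline{\fA\xi}\cap\cC^\perp
\;=\;P_{\cC^\perp}\bigl(\overline{\fA\xi}\bigr)
\;=\;\overline{(\fA/\fc)\eta},
\]
a cyclic $\fA/\fc$-submodule of $\cC^\perp$. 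Replacing $\fA$ by $\fB$ in this same argument and using $\overline{\fB\xi}=\cH$ shows moreover that $\eta$ is cyclic for $\fB/\fc$ on $\cC^\perp$.

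The map $\overline{\fA\xi}\mapsto\overline{\fA\xi}\cap\cC^\perp$ is therefore well-defined, and injectivity is immediate from the decomposition above (one recovers $\overline{\fA\xi}=\cN\oplus\cC$ from $\cN$). The expected main obstacle is surjectivity: given a cyclic $\fA/\fc$-submodule $\cN=\overline{(\fA/\fc)\eta}$ of $\cC^\perp$, I must produce a $\fB$-cyclic $\xi\in\cH$ with $P_{\cC^\perp}\xi=\eta$, so that Step~2 returns precisely $\cN$. I would attack this by lifting: fix any $\fB$-cyclic $\xi_0\in\cH$, write $\xi_0=\eta_0+\zeta_0$, and consider $\xi=\eta+\zeta_0$. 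Combined with Step~1 applied to $\xi_0$, which forces $\zeta_0$ to be a generator of $\cC$ as an $\fc$-module in a strong enough sense, one checks that $\overline{\fc\xi}=\cC$ and $P_{\cC^\perp}(\overline{\fB\xi})=\cC^\perp$, whence $\overline{\fB\xi}=\cH$; this gives a $\fB$-cyclic lift of $\eta$, completing the bijection.
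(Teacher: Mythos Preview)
Your decomposition $\overline{\fA\xi} = \overline{(\fA/\fc)\eta}\oplus\cC$ with $\eta = P_{\cC^\perp}\xi$ is exactly the paper's argument: the paragraph immediately preceding the proposition is all the proof the paper offers, and it establishes only this forward map. So on injectivity you and the paper agree, and you are right to flag surjectivity as the substantive issue --- the paper does not address it.

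Your surjectivity argument has a genuine gap, however. The lift $\xi = \eta + \zeta_0$ need not be $\fB$-cyclic. Your claim that $P_{\cC^\perp}(\overline{\fB\xi}) = \cC^\perp$ unwinds to $\eta$ being $\fB/\fc$-cyclic in $\cC^\perp$, which is not guaranteed by the hypothesis that $\cN = \overline{(\fA/\fc)\eta}$ is merely $\fA/\fc$-cyclic; and your claim that $\zeta_0$ generates $\cC$ over $\fc$ already fails for $\fB = H^\infty$, $\fA = H^\infty_1$, $\xi_0 = 1$, where $\zeta_0 = 0$. In fact surjectivity onto \emph{all} cyclic $\fA/\fc$-submodules is simply false: in that same example $\cC^\perp = \spn\{1,z\}$ and $\fA/\fc\cong\C$, so $\C z$ is a cyclic $\fA/\fc$-submodule, yet no outer $\xi$ can have $P_{\cC^\perp}\xi \in \C z$ since outer functions satisfy $\xi(0)\neq 0$. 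The proposition should be read --- and is used in Section~\ref{sec:scalar_case} --- with the target restricted to cyclic $\fA/\fc$-submodules generated by $\fB/\fc$-cyclic vectors; even with that restriction, producing a $\fB$-cyclic lift of a given $\fB/\fc$-cyclic $\eta$ requires an argument that neither your sketch nor the paper supplies in the abstract setting.
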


\section{Nevanlinna-Pick Families --- The Scalar Case} \label{sec:scalar_case}

Let $H^2_d$, for $d \in \N$, denote the Drury-Arveson space. Recall that this space is a reproducing kernel space of analytic functions on $\B_d$, the unit ball of $\C^d$, with reproducing kernel $k_d(z,w) = \frac{1}{1 - \langle z,w \rangle}$. We will denote by $\cM_d$ the algebra of multipliers of $H^2_d$. In particular, for $d=1$, $H^2_1 = H^2(\D)$ and $\cM_1 = H^{\infty}(\D)$. We will write simply $H^2$ and $H^{\infty}$ for $H^2_1$ and $\cM_1$, respectively. Note that since $\cM_d$ is a multiplier algebra of a reproducing kernel HIlbert space, it is weak-$*$ closed. Let $\fA \subset \cM_d$ be a weak-$*$ closed subalgebra. We fix a set of points $F = \left\{z_1,\ldots,z_n \right\} \subset \B_d$ and assume for simplicity that $\fA$ separates $F$. Let $\cI_F \subset \fA$ be the weak-$*$ closed ideal of functions that vanish on $F$. We say that $\xi \in H^2_d$ is outer if it is $\cM_d$-cyclic. For the convenience of the reader we recall some material from \cite{DavHam11}.

\begin{dfn}
We say that a weak-$*$ closed subalgebra $\fA \subset B(\cH)$ has property $\A_1(1)$, or alternatively that $\fA$ has property $\A_1(1)$ acting on $\cH$, if for every weak-$*$ continuous functional $\varphi$ on $\fA$ with $\|\varphi\| < 1$, there exist $\xi ,\eta \in \cH$, such that $\|\xi\|, \|\eta\| < 1$ and $\varphi(f) = \langle f \xi, \eta \rangle$ for every $f \in \fA$.
\end{dfn}

The following result was established in \cite[Proposition 6.2]{AriPop00} and an alternative argument is found in \cite[Theorem 5.2]{DavHam11}.

\begin{thm}[Arias-Popescu] \label{thm:dav_ham}
Let $\fA \subset \cM_d$ be a weak-$*$ closed operator algebra. Then $\fA$ has $\A_1(1)$ and furthermore, $\xi$ may be chosen to be outer.
\end{thm}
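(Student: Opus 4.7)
The plan is a three-step reduction: first pull the factorization problem from $\fA$ up to all of $\cM_d$, then establish the vector representation on $\cM_d$ itself using the non-commutative Toeplitz structure, and finally arrange for the left vector $\xi$ to be outer through a perturbation argument.

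For the reduction step, let $\varphi$ be a weak-$*$ continuous functional on $\fA$ with $\|\varphi\| < 1$. Since $\fA$ is weak-$*$ closed, its preannihilator $\fA_{\perp} \subset (\cM_d)_*$ is norm closed and the predual of $\fA$ is canonically $(\cM_d)_*/\fA_{\perp}$. By Hahn--Banach, $\varphi$ lifts to $\tilde\varphi \in (\cM_d)_*$ with $\|\tilde\varphi\| = \|\varphi\| < 1$. Any vector representation $\tilde\varphi(f) = \langle f\xi,\eta\rangle$ valid for all $f \in \cM_d$ automatically restricts to the required representation on $\fA$, and $\cM_d$-cyclicity of $\xi$ is preserved under restriction.

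For the core step that $\cM_d$ has property $\A_1(1)$, I would follow the Arias--Popescu route via the non-commutative analytic Toeplitz algebra $\cL_d$ on full Fock space $\cF^2_d$. By the identification recalled in Lemma \ref{lem:DA compression}, $\cM_d$ is the compression of $\cL_d$ to symmetric Fock space (the orthogonal complement of the commutator ideal's range), so any weak-$*$ continuous functional on $\cM_d$ pulls back to a weak-$*$ continuous functional on $\cL_d$ of the same norm. On $\cL_d$, Popescu's dilation theory for row contractions --- applied to the left creation operators $L_1,\ldots,L_d$ --- provides enough structure to factor a weak-$*$ continuous functional of norm less than one as a vector functional $\langle\cdot\,\tilde\xi,\tilde\eta\rangle$ on $\cF^2_d$ with $\|\tilde\xi\|,\|\tilde\eta\| < 1$. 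Compressing these vectors onto symmetric Fock space yields vectors in $H^2_d$ that implement $\tilde\varphi$ on $\cM_d$. The Davidson--Hamilton alternative sidesteps $\cL_d$ entirely and instead exploits the complete Nevanlinna--Pick property of the kernel $k_d$: positivity of the associated Pick matrices combined with a Kaplansky-density argument produces the vector factorization directly inside $H^2_d$.

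To upgrade $\xi$ to be outer, observe that the set of $\cM_d$-cyclic vectors is dense in $H^2_d$ and that the map $\xi \mapsto \langle\cdot\,\xi,\eta\rangle$ is norm continuous. Because the hypothesis $\|\varphi\| < 1$ is strict, one has a definite amount of slack $1 - \|\varphi\|$ to absorb controlled perturbations of $\xi$. The standard maneuver (as executed in \cite{DavHam11}) is an iteration: at each stage, use a fraction of the remaining slack to nudge $\xi$ closer to an outer vector --- for instance by adding a small multiple of the constant function $\mathbf{1}$, which is outer --- and adjust $\eta$ to restore the exact representation of $\varphi$. Summable perturbations ensure the process converges to an outer $\xi$, with both $\|\xi\|$ and $\|\eta\|$ still strictly below one.

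The hard part is the core factorization on $\cL_d$, or equivalently on $\cM_d$ via the complete NP property. The reduction step from $\fA$ is routine duality and the outer refinement is a soft iteration, but obtaining the vector factorization of an arbitrary weak-$*$ continuous functional requires genuine input --- either Popescu's non-commutative dilation theory or the complete Nevanlinna--Pick kernel factorization --- both of which encode deep structural features specific to the Drury--Arveson setting.
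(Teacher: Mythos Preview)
The paper does not actually prove this theorem; it records it as a citation to \cite[Proposition~6.2]{AriPop00} and \cite[Theorem~5.2]{DavHam11}. Your three-step outline is broadly faithful to those sources, and in fact your first two steps match almost exactly the scalar ($k=1$) case of the argument the paper \emph{does} give later for Proposition~\ref{prop:matrix_a11}: lift the functional from $\fA$ to $\cM_d$ by Hahn--Banach on the predual, pull it further back along the completely isometric weak-$*$ quotient $q\colon\cL_d\to\cM_d$, invoke $\A_1(1)$ for $\cL_d$ (Davidson--Pitts \cite{DavPit99}) to obtain vectors in $\cF^2_d$, and push them down via the symmetrization projection $P$.

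Where your sketch diverges --- and becomes genuinely shaky --- is the third step. Neither cited proof obtains outerness by perturbation, and the specific maneuver you suggest does not work: adding a small multiple of $1$ to an arbitrary $\xi\in H^2_d$ need not produce a cyclic vector (already in $d=1$, $\xi=-z$ gives $\xi+\epsilon=\epsilon-z$, which vanishes at $\epsilon\in\D$), and an iteration built on this does not obviously converge to something outer. The actual mechanism is structural, not approximate. Having represented the functional on $\cL_d$ by $\xi,\eta\in\cF^2_d$, one applies the Davidson--Pitts Beurling theorem to the cyclic subspace $\overline{\cL_d\xi}=R\,\cF^2_d$ for an isometry $R\in\cR_d=\cL_d'$, writes $\xi=Ru$ with $u$ necessarily $\cL_d$-cyclic, and replaces $\eta$ by $R^*\eta$. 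Because $H^2_d$ is co-invariant for $\cL_d$, one checks $P\cL_d u=\cM_d(Pu)$, so $Pu$ is automatically $\cM_d$-cyclic. Outerness thus drops out of the wandering-subspace structure of $\cL_d$ with no loss of norm control and no iteration; this is exactly the $s=k=1$ instance of the computation in the proof of Proposition~\ref{prop:matrix_a11}.
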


As was shown in \cite{Abr79}, \cite{DPRS09} and \cite{Rag09}, to understand constrained interpolation we need to consider families of kernels. By \cite[Lemma 2.1]{DavHam11}, every cyclic $\fA$-submodule $L \subset H^2_d$ is a reproducing kernel Hilbert space with respect to a kernel $k^L$ defined on all of $\B_d$.

\begin{dfn}[Davidson-Hamilton]
Let $\fA \subset \cM_d$ be a weak-$*$ closed subalgebra. We say that a collection of kernels $\{k^{L_j}\}_{j \in J}$ associated with cyclic $\fA$-submodules is a Nevanlinna-Pick family, if for every set of points $F = \{z_1,\ldots,z_n\} \subset \B_d$ separated by $\fA$ and complex scalars $w_1,\ldots,w_n$, there exists $f \in \fA$, such that $f(z_{\ell}) = w_{\ell}$, for all $\ell =1,\ldots,d$ and $\|f\| \leq 1$ if and only if the Pick matrices
\[
\left[ (1 - w_k \overline{w_{ell}}) k^{L_j}(z_k,z_{\ell}) \right]_{k,\ell = 1}^n 
\]
are positive for every $j \in J$.
\end{dfn}

By \cite[Theorem 5.5]{DavHam11} every weak-$*$ closed subalgebra of $\cM_d$ admits a Nevanlinna-Pick family parametrized by the outer functions. This parameter family is not described in detail and our goal is to show that for finitely many constraints, this family is parametrized by a finite-dimensional manifold.

Set $\cP_d = \C[z_1,\ldots,z_d]$. Let $\psi \colon \C^d \to \C^e$ be a polynomial map that is an isomorphism outside a finite set of points. Let $A \subset \cP_d$ be the corresponding algebra, i.e, the image of $\cP_e$ under $\psi^*$ and $\fA = \overline{A}^{w*} \subset \cM_d$. The spectrum of $A$ is the ring of polynomial functions on the image of $\C^d$. This is a rational variety with finitely many singular points. Let $\alpha_1,\ldots,\alpha_{\ell}$ be the points in the fibers over the singular points in the image. Let $\fm_j$ be the maximal ideal corresponding to $\alpha_j$. Recall that if $\fp \subset \cP_d$ is a prime ideal, an ideal $\fq$ is called $\fp$-primary, if the only associated prime of $\cP_d/\fq$ is $\fp$. By \cite[Proposition 3.9]{Eis95} we have that there exists $k \in \N$, such that $\fp^k \subset \fq \subset \fp$. By primary decomposition there exist $\fm_j$-primary ideals $\fq_j$, such that $\fc = \fq_1 \cap \cdots \cap \fq_{\ell}$. Thus by the Chinese remainder theorem $\cP_d/\fc \cong \cP_d/\fq_1 \times \cdots \cP_d/\fq_{\ell}$. Now $\cP_d/\fq_j$ is naturally embedded into $\cP_d/\fm_j^{k_j}$ and thus the dual space of $\cP_d/\fq_j$ can be identified with the quotient of the space spanned by the functionals taking the value of the polynomial and its derivatives up-to total order $k_j$ at the point $\alpha_j$.

From now on we will assume that all the points of the support of $\fc$ lie in $\fB_d$, the unit ball of $\C^d$. Let $\fc \subset A$ be the conductor ideal of $A$.  Then the conductor of $\fA$ in $\cM_d$ is the weak-$*$ closure of $\fc$.

\begin{lem} \label{lem:no_more_polys_in_closure}
\strut\qquad\qquad$
A = \cP_d \cap \fA.
$
\end{lem}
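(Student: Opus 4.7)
The inclusion $A \subset \cP_d \cap \fA$ is obvious, so the content is in the reverse inclusion. My strategy is to realize $A$ inside $\cP_d$ as the common kernel of a finite family of linear functionals that extend to weak-$*$ continuous functionals on $\cM_d$; since these extensions then automatically vanish on $\fA$, any polynomial lying in $\fA$ must already lie in $A$.

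The first step is to use the conductor to reduce to a finite-dimensional picture. Exactly as in Lemma~\ref{lem:cond_sq} (applied in the purely algebraic setting $A \subset \cP_d$), one has $A = \pi^{-1}(A/\fc)$ for the quotient $\pi\colon \cP_d \to \cP_d/\fc$. By the discussion preceding the lemma, $\cP_d/\fc \cong \prod_{j=1}^{\ell} \cP_d/\fq_j$ is finite-dimensional, since each $\fq_j$ is $\fm_j$-primary and contains some power $\fm_j^{k_j}$. Choose a finite collection of linear functionals $\ell_1,\dots,\ell_r$ on $\cP_d/\fc$ whose common kernel is $A/\fc$, and let $\tilde\ell_i = \ell_i \circ \pi$ on $\cP_d$. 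Then $A = \{p \in \cP_d : \tilde\ell_i(p) = 0 \text{ for all } i\}$.

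The second step identifies these functionals concretely. Since each $\tilde\ell_i$ vanishes on $\fc$ and $\cP_d/\fq_j$ is a local Artinian ring supported at $\alpha_j \in \fB_d$, a standard duality argument shows that each $\tilde\ell_i$ is a finite linear combination of derivative evaluations $p \mapsto (\partial^\beta p)(\alpha_j)$ at the points $\alpha_j$ (with $|\beta| < k_j$). Now, because $\alpha_j \in \fB_d$, the reproducing kernel $k_{\alpha_j}$ and all its partial derivatives in the $\bar w$-variable lie in $H^2_d$, and for $f \in \cM_d$ the adjoint relation $M_f^* k_w = \overline{f(w)} k_w$ yields, after differentiation in $w$, formulas of the form $(\partial^\beta f)(\alpha_j) = \langle M_f \xi_\beta, \eta_\beta\rangle$ for vectors $\xi_\beta,\eta_\beta \in H^2_d$. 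Thus each $\tilde\ell_i$ extends to a weak-$*$ continuous functional $L_i$ on $\cM_d$.

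To conclude, each $L_i$ vanishes on $A$ by construction, hence vanishes on the weak-$*$ closure $\fA$. If $p \in \cP_d \cap \fA$, then $L_i(p) = \tilde\ell_i(p) = 0$ for every $i$, so $\pi(p) \in A/\fc$, whence $p \in \pi^{-1}(A/\fc) = A$. The only nontrivial technical point is the weak-$*$ continuity of derivative evaluations on $\cM_d$, which I expect to be the main obstacle only insofar as it requires a careful but routine use of the reproducing kernel identity for $\cM_d$; everything else is essentially formal from the conductor formalism already developed in Section~\ref{sec:conductor}.
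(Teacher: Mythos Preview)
Your proposal is correct and follows essentially the same line as the paper's proof: reduce via the conductor square to the finite-dimensional quotient $\cP_d/\fc$, cut out $A/\fc$ by finitely many linear functionals, identify these with derivative evaluations at the $\alpha_j$, and use that such functionals are weak-$*$ continuous on $\cM_d$ to conclude. The only difference is presentational—you argue directly while the paper argues by contrapositive—and you supply more detail on the weak-$*$ continuity via the reproducing kernel identity, which the paper simply asserts.
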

\begin{proof}
One inclusion is obvious. On the other hand the dual space of $\cP_d/\fc$ is spanned by the functionals of evaluation of the polynomial and its derivatives at the points $\alpha_j$. The fiber square \eqref{eq:conductor_square} tells us that $A$ is precisely the preimage of $A/\fc$ in $\cP_d/\fc$. Since $A/\fc$ is a subspace of $\cP_d/\fc$ it is given by the vanishing of finitely many functionals. If $f \in \cP_d \setminus A$, then there exists a functional in $\left(\cP_d/\fc\right)^*$ that vanishes on $A/\fc$ and does not vanish on the image of $f$. Now note that the evaluation functionals and the valuation of derivatives at points inside the disk are weak-$*$ continuous functionals on $\cM_d$. Therefore we can conclude that if $f \in \cP_d \setminus A$, then  $f \notin \fA$.
\end{proof}

The following is a generalization of the Helson-Lowdenslager Theorem in the case when $d=1$ (compare with \cite[Theorem 1.3]{DPRS09} in the case of $H^{\infty}_1$).

\begin{thm} \label{thm:our_hl}
Let $d=1$ and write $\cP_1 = \C[z]$ and $\cM_1 = H^{\infty}$. Let $\cN \subset L^2(\T)$ be an $\fA$-invariant closed subspace, that is not invariant for $H^{\infty}$. Then there exists an $A/\fc$-submodule $M \subset \C[z]/\fc$, such that $\cN = J H^2_M$, where $J$ is an unimodular function.
\end{thm}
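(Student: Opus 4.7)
The plan is to reduce to classical Helson-Lowdenslager by passing to the $H^{\infty}$-invariant hull $\widetilde{\cN} := \overline{H^{\infty} \cN}^{L^2}$, which by the classical theorem is either $\chi_E L^2$ for some measurable $E \subset \T$ or $J H^2$ for some unimodular $J \in L^{\infty}(\T)$. The key algebraic ingredient is that the conductor $\fC := \overline{\fc}^{w*}$ is a weak-$*$ closed ideal of $H^{\infty}$ sitting inside $\fA$; for $q \in \fC$ and $f \in H^{\infty}$ one has $qf \in \fC \subset \fA$, so the $\fA$-invariance of $\cN$ gives $q(f\cN) \subset \cN$, and passing to limits yields the crucial inclusion $\fC \widetilde{\cN} \subset \cN$.

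To rule out the doubly invariant alternative I would exploit that $\fc$ is an ideal of $\C[z]$ supported in $\D$, so it contains a polynomial $p = \prod (z-\alpha_j)^{k_j}$, which has no zeros on $\T$ and is therefore invertible in $L^{\infty}(\T)$. If $\widetilde{\cN} = \chi_E L^2$, then $p \widetilde{\cN} = \chi_E L^2 = \widetilde{\cN}$, so $\widetilde{\cN} = p\widetilde{\cN} \subset \fC \widetilde{\cN} \subset \cN$, forcing $\cN = \widetilde{\cN}$ and contradicting the hypothesis that $\cN$ is not $H^{\infty}$-invariant. Hence $\widetilde{\cN} = JH^2$.

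Now set $N := \bar J \cN \subset \bar J \widetilde{\cN} = H^2$. Since $J$ is unimodular and the elements of $\fA$ are scalar-valued, $\bar J f J = f$ for all $f \in \fA$, so $N$ is an $\fA$-invariant closed subspace of $H^2$. Moreover, the inclusion $\fC \widetilde{\cN} \subset \cN$ translates to $\cC := \overline{\fc H^2} = pH^2 \subset N$ (note $pH^2$ is already closed, since $p$ factors as a Blaschke product times an invertible outer polynomial). Thus $N$ splits as $N = N_0 \oplus \cC$ with $N_0 := N \ominus \cC \subset \cC^{\perp}$, and because $\cC$ is $H^{\infty}$-invariant one checks that $N_0$ is invariant under the compressed action of $\fA$ on $\cC^{\perp}$, which by Lemmas \ref{lem:DA compression} and \ref{lem:no_more_polys_in_closure} is the action of $A/\fc$.

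The final step is to identify $\cC^{\perp}$ with $\C[z]/\fc$ as $\C[z]/\fc$-modules via the composition $\C[z] \hookrightarrow H^2 \twoheadrightarrow \cC^{\perp}$. This map has kernel $\C[z] \cap pH^2 = p\C[z] = \fc$ (since an element of $pH^2$ that is polynomial must have $p$ dividing it in $\C[z]$), giving an injection $\C[z]/\fc \hookrightarrow \cC^{\perp}$ which is a bijection by matching dimensions $\sum k_j$; the module structures agree because $\cC$ being $H^{\infty}$-invariant makes $P_{\cC^{\perp}}(qP_{\cC^{\perp}}(r)) = P_{\cC^{\perp}}(qr)$. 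Under this identification $N_0$ corresponds to an $A/\fc$-submodule $M \subset \C[z]/\fc$, so $N = H^2_M$ in the sense of Definition \ref{dfn:H_M}, and $\cN = JN = JH^2_M$. I expect the main obstacle to be the very first step: verifying $\fC \widetilde{\cN} \subset \cN$ requires that $\fC$ be an ideal of the full algebra $H^{\infty}$ and not just of $\fA$, which is exactly the point of taking the weak-$*$ closure of $\fc$.
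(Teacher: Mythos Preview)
Your proof is correct and follows essentially the same route as the paper: pass to the $H^\infty$-invariant hull $\widetilde{\cN}$, apply classical Helson--Lowdenslager, use the inclusion $\fc\,\widetilde{\cN}\subset\cN$ (coming from $\fc$ being an ideal of $H^\infty$, not just of $\fA$) both to eliminate the doubly-invariant case and to show $J B_c H^2 \subset \cN \subset J H^2$, and then read off $M$ from the resulting finite-dimensional slice. Your write-up is in fact more careful than the paper's in making explicit the identification $\cC^\perp \cong \C[z]/\fc$ and in checking that the compressed $\fA$-action on $\cC^\perp$ really is the $A/\fc$-module action.
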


\begin{proof}
The proof is essentially the proof in \cite{DPRS09}. Set $\widetilde{\cN} = \overline{H^{\infty} \cN}$. Recall that every ideal in $\C[z]$ is principal and thus $\fc$ is generated by $f_c = \prod_{j=1}^k (z - \alpha_j)^{m_j}$. Additionally, we have a Blaschke product $B_c$, such that $\cC = B_c H^2$. By the classical Helson-Lowdenslager theorem, either $\widetilde{\cN}$ is $L^2(E)$ for some measurable $E \subset \T$ or $\widetilde{\cN} = J H^2$ for some unimodular function $J$. In the former case, note that $z$ is a unitary on $\widetilde{\cN}$ and thus $f_c$ acts as an invertible operator. We conclude that $\widetilde{\cN} = f_c \widetilde{\cN} \subset \cN$ and this contradicts our assumption that $\cN$ is not $H^{\infty}$-invariant. In the latter case, we have that $\widetilde{\cN} = J H^2 \supset \cN \supset J B_c H^2$, where all of the containments are strict by assumption (since $\fc$ is an ideal of $\C[z]$ as well). Hence there exists a subspace $0 \neq M \subsetneq \C[z]/\fc$, such that $\cN = J M \oplus J B_c H^2$. Since $\cN$ is invariant under $\fA$, it then follows immediately that $M$ is in fact an $A/\fc$-submodule of $\C[z]/\fc$.
\end{proof}

By Lemma~\ref{lem:DA compression}, we may apply Proposition \ref{prop:cyclic} to the conductor ideal of a subalgebra $\fA$ of the multiplier algebra $\cM_d$. 
Therefore the cyclic $\fA$-submodules of $H^2_d$ of the form $\overline{\fA \xi}$, where $\xi$ is outer, are parametrized by cyclic $A/\fc$ submodules of $\cP_d/\fc$. Since outer functions do not vanish, the image of an outer function in $\cP_d/\fc$ is a unit. Let us denote the unit group of the Artinian ring $\cP_d/\fc$ by $\ugrp$ and the unit group of $A/\fc$ by $\augrp$. 

Now let $\xi_1, \xi_2$ be outer and assume that $\overline{\fA \xi_1} = \overline{\fA \xi_2}$. Let us denote $f_j = P_{\cC^{\perp}} \xi_j$, for $j= 1,2$. Since $\overline{\fA \xi_j} = f_j A/\fc  \oplus \cC$, implies that $f_1 A/\fc  = f_2 A/\fc $. Thus there exists a unit $u \in A/fc$, such that $f_1 = f_2 u$. Clearly, if such a unit exists, then $f_1 A/\fc = f_2 A/\fc$ and thus $\overline{\fA \xi_1} = \overline{\fA \xi_2}$. Hence we obtain the following lemma.

\begin{lem} \label{lem:first_step}
Let $\fA \subset \cM_d$ be as above, then $\fA$ has a Nevanlinna-Pick family parametrized by $\ugrp/\augrp$.
\end{lem}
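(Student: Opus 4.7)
The plan is to glue together pieces already assembled in the paper. By Theorem \ref{thm:dav_ham} combined with \cite[Theorem 5.5]{DavHam11}, $\fA$ has a Nevanlinna-Pick family parametrized by the cyclic submodules $\overline{\fA\xi}$ with $\xi \in H^2_d$ outer, so it suffices to identify this indexing set with $\ugrp/\augrp$. Lemma \ref{lem:DA compression} verifies the hypothesis of Proposition \ref{prop:cyclic} for the pair $\fA \subset \cM_d$, so those cyclic submodules correspond bijectively to cyclic $A/\fc$-submodules of $\cC^\perp \cong \cM_d/\fc$, identified as a vector space with $\cP_d/\fc$; explicitly, $\overline{\fA\xi}$ is sent to $f_\xi \cdot (A/\fc)$, where $f_\xi = P_{\cC^\perp}\xi$ is viewed in $\cP_d/\fc$.

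The paragraph immediately preceding the statement already establishes two of the three pieces needed: first, that for outer $\xi$ the element $f_\xi$ lies in $\ugrp$, because an outer function has no zeros in $\B_d$ and the support of $\fc$ lies in $\B_d$, so evaluation at each $\alpha_j$ is nonzero; second, that two outer functions $\xi_1,\xi_2$ satisfy $\overline{\fA\xi_1} = \overline{\fA\xi_2}$ iff $f_{\xi_1} f_{\xi_2}^{-1} \in \augrp$. Consequently $\xi \mapsto [f_\xi]$ descends to a well-defined injection from the indexing set of the Nevanlinna-Pick family into $\ugrp/\augrp$; what remains is to show that this map is surjective.

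For surjectivity, given $u \in \ugrp$, I would lift $u$ to a polynomial $p_0 \in \cP_d$ and then add a correction $q \in \fc$ so that $p := p_0 + q$ has no zeros on $\overline{\B_d}$. The congruence class of $p$ modulo $\fc$ is unchanged, while $\fc$ is an infinite-dimensional ideal whose constraints (prescribed jets at the $\alpha_j$) are disjoint from the open condition of non-vanishing on the compact set $\overline{\B_d}$, so such a $q$ exists. Once $p$ has no zero on $\overline{\B_d}$, its reciprocal $1/p$ is holomorphic on a neighborhood of $\overline{\B_d}$ and therefore lies in $\cM_d$, so $1 \in \cM_d p$ and $p$ is $\cM_d$-cyclic, i.e., outer, realizing the class $u$. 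The main obstacle I anticipate is precisely this surjectivity: the unit class in $\cP_d/\fc$ depends only on finite jet data at the $\alpha_j$, whereas outerness is a global condition on $\B_d$, so one must carefully exploit the freedom inside $\fc$ to meet both requirements simultaneously.
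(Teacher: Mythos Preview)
Your first two paragraphs reproduce exactly the paper's argument, which is the discussion immediately preceding the lemma: the Nevanlinna--Pick family from \cite[Theorem~5.5]{DavHam11} is indexed by outer $\xi$; via Lemma~\ref{lem:DA compression} and Proposition~\ref{prop:cyclic} the module $\overline{\fA\xi}$ is determined by the cyclic $A/\fc$-submodule $(A/\fc)f_\xi$ of $\cP_d/\fc$; outerness forces $f_\xi\in\ugrp$; and two such submodules coincide iff the generators differ by an element of $\augrp$. The paper stops there and records the lemma.

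You go beyond the paper in flagging surjectivity of $\xi\mapsto[f_\xi]$, which the paper does not address. Your proposed fix points the right way but both steps are under-argued: that some $q\in\fc$ makes $p_0+q$ zero-free on $\overline{\B_d}$ does not follow merely from ``$\fc$ is infinite-dimensional and the constraints are disjoint'', and for $d\ge 2$ the implication ``$1/p$ holomorphic near $\overline{\B_d}\Rightarrow 1/p\in\cM_d$'' needs justification since $H^\infty(\B_d)\neq\cM_d$. A cleaner route: given $u\in\ugrp$, choose a polynomial $h$ whose jet at each $\alpha_j$ matches that of a branch of $\log u$, and set $\xi=e^h$; since $\|h^n\|_{\cM_d}\le\|h\|_{\cM_d}^n$, the series for $e^{\pm h}$ converge in $\cM_d$ to mutual inverses, so $e^h$ is $\cM_d$-cyclic with the prescribed jets. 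Alternatively one can bypass surjectivity altogether: for each $u\in\ugrp$ the space $\cH_{u(A/\fc)}$ of Definition~\ref{dfn:H_M} is $\fA$-invariant, and adjoining kernels of $\fA$-invariant subspaces to a Nevanlinna--Pick family preserves both directions of the Pick criterion.
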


We can actually give a better geometric description of the parameter space. Consider again the conductor square \eqref{eq:conductor_square} and by \cite[Equation 1.4]{ReiUll74} we have an exact sequence:
\[
\xymatrix{1 \ar[r] & A^{\times} \ar[r]^(.3)\iota & \cP_d^{\times} \times \augrp \ar[r]^\alpha & \ugrp \ar[dl]
 \\ && \Pic(A) \ar[r] & \Pic(A/\fc) \times \Pic(\C[z]).  }
\]
Here the first map is $\iota \colon \C^{\times} \cong A^{\times} \to \cP_d^{\times} \times \left(A/\fc \right)^{\times} \cong \C^{\times}\times \left(A/\fc \right)^{\times}$ is $\iota(t) = (t,t)$. The second map is $\alpha(t,f) = t f^{-1}$. Now it is clear that the image of $\alpha$ is precisely $\left( A/\fc \right)^{\times}$ and additionally since $\Pic(\cP_d) = 1$ (see for example \cite[Theorem 1.6]{GilHei80}). Finally, we note that $A/\fc$ is an Artinian ring and thus is a finite product of Artinian local rings. We conclude that $\Pic(A/\fc) =1$. This allows us to simplify the exact sequence into
\[
\xymatrix{1 \ar[r] & \left(A/\fc\right)^{\times} \ar[r] & \left(\C[z]/\fc\right)^{\times} \ar[r] & \Pic(A) \ar[r] & 1}.
\]
Thus $\Pic(A) = \left(\C[z]/\fc\right)^{\times}/\left(A/\fc\right)^{\times}$. We summarize this discussion in the following theorem.
\begin{thm} \label{thm:NP-Picard}
Let $\fA \subset \cM_d$ be a weak-$*$ closure of a subring $A \subset \cP_d$ that arises from a parametrization of a singular rational variety with isolated singular points. Then $\fA$ admits a Nevanlinna-Pick family parametrized by the Picard group of $A$.
\end{thm}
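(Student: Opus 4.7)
The plan is essentially to combine Lemma~\ref{lem:first_step} with a standard computation of $\Pic(A)$ via the Mayer--Vietoris sequence for the conductor square. Since Lemma~\ref{lem:first_step} already parametrizes the Nevanlinna-Pick family by the quotient group $\ugrp/\augrp$, the entire content of the theorem is the identification of this quotient with $\Pic(A)$.

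First I would recall that the conductor square \eqref{eq:conductor_square} is a classical Milnor (Cartesian) square of commutative rings: the surjection $\cP_d \to \cP_d/\fc$ together with the inclusion $A/\fc \hookrightarrow \cP_d/\fc$ identifies $A$ with the pullback $\cP_d \times_{\cP_d/\fc} (A/\fc)$. This is exactly the content of the last sentence of Lemma~\ref{lem:cond_sq} applied in the algebraic setting. For such a square the standard Mayer--Vietoris/units-Picard exact sequence (attributed in the excerpt to Reid--Ullery) gives
\begin{equation*}
1 \longrightarrow A^{\times} \longrightarrow \cP_d^{\times} \times \augrp \xrightarrow{\;\alpha\;} \ugrp \longrightarrow \Pic(A) \longrightarrow \Pic(\cP_d) \times \Pic(A/\fc),
\end{equation*}
where $\alpha(t,f)=tf^{-1}$.

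Next I would dispose of the two Picard groups on the right. For $\Pic(\cP_d)$, the Quillen--Suslin theorem shows every finitely generated projective $\cP_d$-module is free, so $\Pic(\cP_d)=1$; alternatively one uses that $\cP_d$ is a UFD. For $\Pic(A/\fc)$, the hypothesis that the singular locus consists of finitely many points $\alpha_1,\dots,\alpha_\ell$ and the primary decomposition $\fc = \fq_1 \cap \cdots \cap \fq_\ell$ recalled before Lemma~\ref{lem:no_more_polys_in_closure} show, via the Chinese Remainder Theorem, that $A/\fc$ is a finite product of Artinian local rings; each factor has trivial Picard group (projectives over a local ring are free), so $\Pic(A/\fc)=1$.

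With these two vanishings in hand, the exact sequence collapses to
\begin{equation*}
1 \longrightarrow \augrp \longrightarrow \ugrp \longrightarrow \Pic(A) \longrightarrow 1,
\end{equation*}
identifying $\ugrp/\augrp \cong \Pic(A)$. Composing this isomorphism with the parametrization from Lemma~\ref{lem:first_step} yields the desired Nevanlinna-Pick family indexed by $\Pic(A)$. The only step that required any real thought is the verification that the conductor square is a Milnor/pullback square and that the hypothesis ``singular locus is finite'' is exactly what forces $\Pic(A/\fc)=1$; both are immediate from the setup already laid out in Section~\ref{sec:conductor} and the primary decomposition argument preceding Lemma~\ref{lem:no_more_polys_in_closure}, so I do not expect a genuine obstacle.
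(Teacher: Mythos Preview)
Your proposal is correct and follows essentially the same route as the paper: invoke Lemma~\ref{lem:first_step}, apply the units--Picard Mayer--Vietoris sequence to the conductor square, kill $\Pic(\cP_d)$ and $\Pic(A/\fc)$, and read off $\ugrp/\augrp \cong \Pic(A)$. Two small corrections: the exact sequence is due to Reiner--Ullom \cite{ReiUll74}, not ``Reid--Ullery''; and the primary decomposition you cite is for $\cP_d/\fc$, not $A/\fc$---the correct (and simpler) reason that $A/\fc$ is a finite product of Artinian local rings is just that it is a finite-dimensional $\C$-algebra, hence Artinian.
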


This is analogous to the situation in \cite{Abr79} as we have a collection of line bundles parametrizing the Nevanlinna-Pick families for a multiply connected domain.

\begin{example}
In the case of the algebras $\fA = \C 1 + B H^{\infty}$ considered in \cite{Rag09}, with $B =\prod_{j=1}^r \left(\frac{z - \lambda_j}{1 - z \overline{\lambda_j}}\right)^{k_j}$ a finite Blaschke product, the algebra $A = \fA \cap \C[z]$ is the algebra $A = \C 1 + f \C[z]$, where $f = \prod_{j=1}^r \left(z - \lambda_j\right)^{k_j}$ is the (monic) polynomial with the zeroes prescribed by $B$ including the order. Then $\fc = (f)$ and $A/\fc \cong \C$, whereas $\C[z]/\fc \cong \C[z]/(z-\lambda_1)^{k_1} \times \cdots \times \C[z]/(z-\lambda_r)^{k_r}$. The ring $A/\fc$ embeds into $\C[z]/\fc$ diagonally. The units of $\C[z]/\fc$ are precisely the elements with non-zero value at every $\lambda_j$. Hence as a space $\left( \C[z]/\fc\right)^{\times} \cong \underbrace{\C^{\times} \times \cdots \times \C^{\times}}_{r \text{-times}} \times \C^{k_1-1} \times \cdots \times \C^{k_r - 1}$. We conclude that the Picard group parametrizing the Nevanlinna-Pick family has dimension $ \sum_{j=1}^r k_j -1$.

In particular, for the algebra $H^{\infty}_1$ considered in \cite{DPRS09} we have that $A = \C[z^2, z^3] = \C 1 + z^2 \C[z]$ and $\Pic(A) \cong \C$. The disparity with the compact parameter space obtained in \cite{DPRS09} follows from the fact that the authors of \cite{DPRS09} consider all cyclic non-trivial $A/(z^2)$-submodules of $\C[z]/(z^2)$. This results in a one point compactification of $\Pic(A)$ yielding the complex projective space or a sphere as stated in \cite{DPRS09}. If one allows in the above setting all non-trivial cyclic modules, in other words if we replace $\left( \C[z]/\fc \right)^{\times}$ with $\C[z]/\fc \setminus \{0\}$, then one will obtain the projective space $\pp^{\sum_{j=1}^r k_j}(\C)$ that is compact.
\end{example}

\begin{example}
Consider the map $\varphi \colon \D \to \D^2$ given by $\varphi(z) = (z^2, z^5)$. In this case $A = \C[z^2,z^5]$ and $\fc = (z^4)$. Note that $\fA$ is not of the form $\C 1 + B H^{\infty}$. However, the method introduced above allows us to handle this case. We have that $A/(z^4)$ is spanned by $\{1,z^2\}$. Again the ring $\C[z]/(z^4)$ is a local Artinian ring and so is $A/(z^4)$. The invertible elements in $\C[z]/(z^4)$ are those of the form $\alpha + \beta z + \gamma z^2 + \delta z^3$, where $\alpha \neq 0$. The units of $A/(z^4)$ act on these points via
\[
(a, 0, b, 0) \cdot (\alpha,\beta, \gamma, \delta) = (a \alpha, a \beta, a \gamma + b \alpha, a \delta + b \beta).
\]
Since both $\alpha$ and $a$ are non-zero, the orbit of each point $(\alpha, \beta,\gamma,\delta)$ is the intersection of the plane spanned by $(\alpha,\beta,\gamma,\delta)$ and $(0,0,\alpha,\beta)$ and the open affine subset of $\C^4$ defined by $\alpha \neq 0$. Note also that the stabilizer of each point is trivial. The coordinate ring of this affine subset is $B = \C[\alpha,\beta,\gamma,\delta,1/\alpha]$. The quotient by the action of our group corresponds to the subring of fixed elements. It is straightforward to check that $B^{\left(A/(z^4)\right)^{\times}} = \C[\beta/\alpha,(\delta \alpha - \gamma \beta)/\alpha^2]$. The map $\C^4 \setminus \{\alpha = 0\} \to \C^2$, given by $(\alpha,\beta,\gamma,\delta) \mapsto (\beta/\alpha,(\delta \alpha - \gamma \beta)/\alpha^2)$ is surjective and thus $\C^2$ is the quotient. For each point $(x,y) \in \C^2$, we can associate an invertible element $f_{x,y} = 1 + x z + y z^3$ in $\C[z]/(z^4)$. The $A/(z^4)$-module generated by $f_{x,y}$ is spanned by $f_{x,y}$ and $z^2 f_{x,y} = z^2 + x z^3$. So we have that $H^2_{f_{x,y}} = \operatorname{Span}\{f_{x,y}, z^2 f_{x,y}\} \oplus z^4 H^2$. Let $g_{x,y}$ and $h_{x,y}$ be an orthonormal basis for the space spanned by $f_{x,y}$ and $z^2 f_{x,y}$. Then the reproducing kernel of this space is
\[
k_{x,y}(z,w) = g_{x,y}(z) \overline{g_{x,y}(w)} + h_{x,y}(z) \overline{h_{x,y}(w)} + \frac{z^4 \bar{w}^4}{1 - z \bar{w}}.
\]
If one would like a compact parameter space, one can compactify $\C^2$ by considering $\pp^2(\C)$ instead, where one identifies $\C^2$ as an open affine subset in $\pp^2(\C)$ via the map $(x,y) \mapsto (1:x:y)$. Then our rational map can be extend by $(\alpha,\beta,\gamma,\delta) \mapsto (\alpha^2:\alpha \beta: \delta \alpha - \gamma \beta)$ to the complement of the plane $\{\alpha = \beta = 0\}$. This subset corresponds to all elements $f \in \C[z]/(z^4)$, such that the $A/(z^4)$-submodule  $A/(z^4) f$ is a two dimensional vector space.

\end{example}

\begin{example}
Consider the algebra $A = \C[w,zw,z^2,z^3] \subset \cP_2 = \C[z,w]$. The map $\varphi(z,w) = \left(z^3,z^2,zw,w \right)$ is injective from $\C^2$ to $\C^4$. It is also easy to check that the only singularity of this map is at the origin. Recall that the Veronese map of degree $3$ from $\pp^2(\C)$ to $\pp^{9}(\C)$ is defined by
\[
(z:w:u) \mapsto (z^3: z^2 w: z^2 u: z w^2: z w u: z u^2: w^3: w^2 u: w u^2: u^3).
\]
Restricting to the open affine subset $u \neq 0$, we get the map
\[
(z,w) \mapsto (z^3, z^2 w, z^2, z w^2, z w, z, w^3, w^2, w, 1).
\]
Now it is easy to see that $\varphi$ is the map obtained by composition  with the projection onto the first, third, fifth and ninth coordinates. Hence the spectrum of $A$ is a projection of the above affine open subset of the degree $3$ Veronese variety.

A monomial $z^n w^m \in A$ if $m \neq 0$ and if $m =0$, then we must have $n > 1$ or $n =0$. Thus the only monomial not in $A$ is $z$. Consequently the conductor ideal is generated by $w$ and $z^2$. The quotient $\C[z,w]/\fc \cong \C[z]/(z^2)$ and $A/\fc \cong \C$. Therefore, the parameter space in this example is isomorphic to the parameter space of $H^{\infty}_1$. For each point $\alpha + \beta z$, with $|\alpha|^2 + |\beta|^2 = 1$, we get the following kernel
\[
k_{\alpha,\beta}(z,w,u,v) = (\alpha + \beta z)\overline{(\alpha + \beta u)} + \frac{w\bar{v} + z^2 \bar{u}^2 + z w \bar{u}\bar{v}}{1 -z \bar{u} - w \bar{v}}.
\]
The numerator in the fraction is obtained by observing that 
\[
M_w M_w^* + M_{z^2} M_{z^2}^* + M_{zw} M_{zw}^* = M_w M_w^* + M_z M_z^* - P_z = I - P_1 - P_z.
\]
Here $P_1$ and $P_z$ stand for the orthogonal projections onto the spaces spanned by $1$ and $z$, respectively. Hence this is the orthogonal projection onto $\overline{w H^2_2 + z^2 H^2_2}$. 

Recall from \cite[Definition 2.6]{Arv00} that for a Hilbert submodule $\cK \subset H^2_d$, a sequence $f_1,f_2,\ldots \in \cM_d$ is called an inner sequence if $\sum_{j=1}^{\infty} M_{f_j} M_{f_j}^* = P_{\cK}$ and for almost every $\zeta \in \partial \B_d$, we have $\sum_{j=1}^{\infty} |f_j(\zeta)|^2 = 1$. Consider our sequence $f_1(z,w) = w$, $f_2(z,w) = z^2$ and $f_3(z,w) = zw$ (complemented by zeroes). We have already seen that the first condition of Arveson's definition is satisfied. To see the second note that for every $\zeta = (z,w)$, such that $|z|^2 + |w|^2 =1$, we have
\[
|w|^2 + |z|^4 + |z|^2 |w|^2 = |w|^2 + |z|^2(|z|^2 + |w|^2) = |w|^2 + |z|^2 = 1.
\]
Hence this is an inner sequence for the submodule $\overline{w H^2_2 + z^2 H^2_2}$.
\end{example}

\section{Nevanlinna-Pick Families --- The Matrix Case} \label{sec:matrix_case}

First, we need an analog of \cite[Theorem 3.1]{DavHam11} in the setting of the Drury-Arveson space. In \cite[Lemma 7.4]{DavHam11} the first author and Hamilton prove this result for $d = 1$. The proof of the following proposition, just like the proof of \cite[Theorem 3.1]{DavHam11}, is based on \cite{DavPit99}.

\begin{prop} \label{prop:matrix_a11}
Let $k \in \N$, then $M_k(\cM_d)$ acting on $M_k(H^2_d)$ by transposed right multiplication, i.e, the map $M_k(\cM_d) \to M_{k^2}(\cM_d)$ is given by $F \mapsto F \otimes I_k$,  has $\A_1(1)$. More precisely, for every weak-$*$ continuous functional $\varphi$ on $M_k(\cM_d)$ with $\|\varphi\| < 1$, there exist $X,Y$ in $\left(H^2_d\right)^{\oplus ks}$ such that $s \leq k$, $\|X\|, \|Y\| < 1$ and $\varphi(F) = \langle (F \otimes I_s) X, Y \rangle$. Moreover, $X$ can be chosen to be $M_{k}(\cM_d)$-cyclic.
\end{prop}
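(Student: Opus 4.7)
The plan is to mirror the $d=1$ argument of \cite[Lemma 7.4]{DavHam11}, with the extra step of first lifting from $\cM_d$ to the non-commutative analytic Toeplitz algebra $\cL_d$ on full Fock space $\cF^2_d$, handling the matrix case there via the Davidson--Pitts machinery of \cite{DavPit99}, and then descending by compression back to the Drury--Arveson space $H^2_d$.

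First, I would lift $\varphi$ to $M_k(\cL_d)$. By \cite[Theorem 2.1]{DavPit98a} (the key input to Lemma \ref{lem:DA compression}), the compression of $\cL_d$ to $H^2_d$ is completely isometrically isomorphic to $\cM_d$, with kernel the commutator ideal; the same applies after taking $k \times k$ matrices, so $M_k(\cM_d)$ is a weak-$*$ continuous quotient of $M_k(\cL_d)$. Pull $\varphi$ back via Hahn--Banach to a weak-$*$ continuous functional $\tilde\varphi$ on $M_k(\cL_d)$ of the same norm.

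Next, establish a matrix $\A_1(1)$ property for $\cL_d$. The amplified algebra $M_k(\cL_d) \otimes I_k$ acting on $\cF^2_d \otimes \C^k \otimes \C^k$ commutes with $I \otimes I \otimes M_k(\C)$, and this commutant allows one to condition a trace-class representative of $\tilde\varphi$ to at most $k$ Schmidt pieces adapted to the amplified action. Combined with the wandering-vector factorization of \cite{DavPit99}, this yields $\tilde X, \tilde Y \in (\cF^2_d)^{ks}$ with $s \leq k$, $\|\tilde X\|, \|\tilde Y\| < 1$, $\tilde X$ chosen $M_k(\cL_d)$-cyclic, and
\[ \tilde\varphi(\tilde F) = \langle (\tilde F \otimes I_s) \tilde X, \tilde Y \rangle \quad \text{for all } \tilde F \in M_k(\cL_d). \]

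Finally, descend by compression. Let $P$ project $\cF^2_d$ onto $H^2_d$, and set $X = (P \otimes I_{ks}) \tilde X$, $Y = (P \otimes I_{ks}) \tilde Y$. Since each $F \in M_k(\cM_d)$ lifts to some $\tilde F \in M_k(\cL_d)$ with $P \tilde F = F P$ on $H^2_d$, the factorization descends to $\varphi(F) = \langle (F \otimes I_s) X, Y \rangle$ with the norm bounds preserved. To promote $X$ to $M_k(\cM_d)$-cyclic, use the density of outer vectors in $H^2_d$ (Theorem \ref{thm:dav_ham}) together with the slack in the strict inequality $\|X\| < 1$ to perturb each column of $X$ to a cyclic vector while preserving the factorization up to arbitrarily small error; a standard approximation then yields the desired cyclic representation.

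The technical heart of the argument, and the main obstacle, is the matrix $\A_1(1)$ for $\cL_d$ with the sharp rank bound $s \leq k$. The scalar $\A_1(1)$ of \cite{DavPit99} does not directly supply this; one must align the Hilbert--Schmidt / wandering-vector decomposition of the trace-class representative with the commutant $I \otimes M_k(\C)$ of $M_k(\cL_d) \otimes I_k$, which is precisely what forces the rank to drop to at most $k$. Once this matrix $\A_1(1)$ is in place, the compression step is routine and the cyclicity step is a standard perturbation argument.
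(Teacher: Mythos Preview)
Your high-level strategy---lift to $M_k(\cL_d)$, factor there, compress back to $H^2_d$---matches the paper's exactly. But your execution of the two interior steps has gaps, and in both places the paper's argument is simpler than what you sketch.

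\textbf{The rank bound.} You describe the matrix $\A_1(1)$ for $\cL_d$ with $s \le k$ as the main obstacle and propose to obtain it by ``conditioning a trace-class representative to at most $k$ Schmidt pieces'' using the commutant $I \otimes I \otimes M_k$. This is vague, and it mislocates the difficulty. In fact \cite{DavPit99} already gives $\A_1(1)$ for $M_k(\cL_d)$ acting on $(\cF^2_d)^{\oplus k}$ (not $\oplus k^2$), so one immediately has $\xi,\eta \in (\cF^2_d)^{\oplus k}$ with $\tilde\varphi(T)=\langle T\xi,\eta\rangle$ and $\|\xi\|,\|\eta\|<1$. The rank bound then comes from the Beurling-type theorem \cite[Theorem~2.1]{DavPit99}: the $\cL_d$-invariant subspace $\cK=\overline{\sum_j \cL_d\,\xi_j}$ is the range of a row isometry $R\colon (\cF^2_d)^{\oplus s}\to\cF^2_d$ with entries in $\cR_d=\cL_d'$, and since $\cK$ is generated by $k$ vectors one has $s\le k$. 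Writing $\xi_j=Ru_j$ and $v_j=R^*\eta_j$ and using that $R$ commutes with $\cL_d$, one gets $\tilde\varphi(F)=\langle (F\otimes I_s)U,V\rangle$. No Schmidt-decomposition argument is needed; the commutant that matters is $\cR_d$, not $I\otimes I\otimes M_k$.

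\textbf{Cyclicity.} Your perturbation argument is both unnecessary and unjustified: perturbing $X$ destroys the exact identity $\varphi(F)=\langle (F\otimes I_s)X,Y\rangle$, and a limit of cyclic vectors need not be cyclic, so ``a standard approximation'' does not close the gap. In the paper's construction cyclicity comes for free: since $R$ is an isometry onto $\cK$ and the $\xi_j$ generate $\cK$ over $\cL_d$, the $u_j=R^*\xi_j$ generate $(\cF^2_d)^{\oplus s}$ over $\cL_d$, so $U$ is $M_k(\cL_d)\otimes I_s$-cyclic; compressing by $P$ then makes $X=PU$ cyclic for $M_k(\cM_d)\otimes I_s$.
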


\begin{proof}
Let $\cF^2_d = \oplus_{n=0}^{\infty} \left( \C^d \right)^{\otimes n}$ be the full Fock space. The symmetrization map is a projection $P \colon \cF^2_d \to H^2_d$. The cutdown by $P$ induces a complete contractive and weak-$*$ continuous map $q \colon \cL_d \to \cM_d$, where $\cL_d$ stands for the weak-$*$ closed algebra generated by the left creation operators on $\cF^2_d$. Let $q_k \colon M_k(\cL_d) \to M_k(\cM_d)$ denote $q \otimes I_{M_k}$. Any weak-$*$ continuous functional $\varphi$ on $M_k(\cM_d)$ can be pulled-back to $M_k(\cL_d)$ without increasing the norm ($\varphi \circ q_k$). By \cite{DavPit99} the algebra $M_k(\cL_d)$ has $\A_1(1)$. Hence we can find two vectors $\xi, \eta \in \left(\cF^2_d\right)^{\oplus k}$, such that $\varphi \circ q_k(T) = \langle T \xi, \eta \rangle$, for every $T \in M_k(\cL_d)$. Let $\xi = \left( \xi_1,\ldots,\xi_k \right)^T$ and let $\cK = \sum_{j=1}^k \cL_d \xi_j$ be the $\cL_d$-invariant subspace generated by the coordinates of $\xi$. By \cite[Theorem 2.1]{DavPit99}, there exists a row isometry $R \colon \left(\cF_d^2\right)^{\oplus s} \to \cF^2_d$ with coordinates in $\cR_d = \cL_d'$, such that $\cK = R \left(\cF_d^2\right)^{\oplus s}$ and $s \leq k$. Set $\xi_j = R u_j$, for $1 \leq j \leq k$ and $u_1,\ldots,u_k \in \left(\cF_d^2\right)^{\oplus s}$. Additionally, let $\eta = \left(\eta_1,\ldots,\eta_k \right)^T$ and set $v_j = R^* \eta_j$, for $1 \leq j \leq k$. Note that $u_1,\ldots, u_k$ generate $\left(\cF_d^2\right)^{\oplus s}$ as a $\cL_d$-module. Hence if we let $U$ be the column vector obtained by stacking the $u_j$ and similarly $V$ is the column vector of the $v_j$, then
\[
\varphi \circ q_k(F) = \langle F \xi, \eta \rangle = \langle F (I_k \otimes R) U, \eta \rangle = \langle F \otimes I_s) U, V \rangle.
\]
Now set $X = P U$ and $Y = P V$. Since $u_1,\ldots,u_k$ generate the $\cL_d$-module $\left(\cF_d^2\right)^{\oplus s}$, then $U$ is $M_k(\cM_d) \otimes I_s$-cyclic and hence so is $X$.
\end{proof}

\begin{cor} \label{cor:vec_to_mat}
In the proof above one may consider $U$ as a matrix with columns $u_j$ and similarly $V$ is a matrix with columns $v_j$. Let $X = P U$ and $Y = P V$. Then for every $F \in M_k(\cM_d)$ we have $\varphi(F) = \tr\left( X F^T Y^* \right)$.
\end{cor}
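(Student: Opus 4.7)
The plan is to establish the identity of the corollary by applying Proposition~\ref{prop:matrix_a11} and then unwinding the meaning of $\tr(X F^T Y^*)$ in coordinates. From the proposition we have
\[
\varphi(F) \;=\; \langle (F \otimes I_s) X, Y \rangle,
\]
where $X = PU$ and $Y = PV$ are viewed as column vectors in $(H^2_d)^{\oplus ks} \cong ((H^2_d)^{\oplus s})^{\oplus k}$, whose $j$-th blocks are $x_j = Pu_j$ and $y_j = Pv_j$, respectively. Writing $X_{pj}$ for the $p$-th component of $x_j$ (and similarly $Y_{pi}$), and noting that $F \otimes I_s$ acts diagonally on the $s$ internal coordinates, one obtains
\[
\langle (F \otimes I_s) X, Y \rangle \;=\; \sum_{i,j=1}^{k} \sum_{p=1}^{s} \langle F_{ij}\, X_{pj}, Y_{pi} \rangle_{H^2_d}.
\]

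Next, I would view $X$ and $Y$ as $s \times k$ matrices with $H^2_d$-valued entries and interpret the product $X F^T Y^*$ entrywise. Using that $\cM_d$ acts as multipliers on $H^2_d$, ordinary matrix multiplication produces $(X F^T)_{pi} = \sum_{j} F_{ij}\, X_{pj} \in H^2_d$, while right multiplication by $Y^*$ is interpreted by pairing with the inner product on $H^2_d$, giving $(X F^T Y^*)_{pp'} = \sum_i \langle (X F^T)_{pi}, Y_{p'i}\rangle$. Taking the trace yields
\[
\tr(X F^T Y^*) \;=\; \sum_{p=1}^{s} \sum_{i=1}^{k} \langle (X F^T)_{pi}, Y_{pi} \rangle \;=\; \sum_{i,j,p} \langle F_{ij}\, X_{pj}, Y_{pi} \rangle,
\]
which matches the expansion of $\langle (F \otimes I_s) X, Y \rangle$ above.

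The only point that demands care --- hardly an obstacle --- is fixing a consistent convention for the identification of $(H^2_d)^{\oplus ks}$ with $M_{s \times k}(H^2_d)$ via stacking columns. This convention forces the appearance of $F^T$ rather than $F$: the column index of $X$ is contracted with the \emph{first} index of $F$ under $F \otimes I_s$, whereas in a matrix product $X \cdot (\,\cdot\,)$ it is contracted with the \emph{second} index; the transpose compensates for this mismatch. Once the bookkeeping is made explicit, the corollary follows by direct comparison of the two expansions.
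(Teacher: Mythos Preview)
Your argument is correct and is exactly the computation the paper leaves implicit: the corollary is stated without proof, as an immediate consequence of Proposition~\ref{prop:matrix_a11} via the standard identity $\mathrm{vec}(XF^{T}) = (F\otimes I_s)\,\mathrm{vec}(X)$, which is precisely what your coordinate expansion verifies.
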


\begin{lem} \label{lem:inj}
The rank of the matrix $X(z)$, for $z \in \B_d$ is constant and is equal to $s$, where $s$ is the number obtained in Proposition \ref{prop:matrix_a11}.
\end{lem}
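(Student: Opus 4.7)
The plan is to derive a contradiction from any potential rank deficiency of $X(z_0)$ by combining the cyclicity of $X$ furnished by Proposition~\ref{prop:matrix_a11} with the eigenvector behavior of the Drury-Arveson reproducing kernel.

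Following Corollary~\ref{cor:vec_to_mat}, I view $X$ as the $s \times k$ matrix with columns $x_1,\ldots,x_k \in (H^2_d)^{\oplus s}$, so that $X(z)$ is an $s \times k$ complex matrix and $\rank X(z) \leq s$. Suppose for contradiction there exists $z_0 \in \B_d$ with $\rank X(z_0) < s$. Then some nonzero $c \in \C^s$ satisfies $c^* X(z_0) = 0$, equivalently $c^* x_j(z_0) = 0$ for every $1 \leq j \leq k$. Setting $\eta := k_d(\cdot,\, z_0) \otimes c \in (H^2_d)^{\oplus s}$, the reproducing property of the Drury-Arveson kernel rewrites this as $\langle x_j, \eta \rangle = 0$ for $j = 1,\ldots,k$.

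Next I would upgrade this orthogonality from the individual columns $x_j$ to the entire $\cM_d$-module they generate. The vector $\eta$ is a joint eigenvector for the adjoints of scalar multipliers: since $M^* k_d(\cdot,\, z_0) = \overline{M(z_0)}\, k_d(\cdot,\, z_0)$ for every $M \in \cM_d$, we have $(M \otimes I_s)^* \eta = \overline{M(z_0)}\,\eta$, and hence
\[
\langle (M \otimes I_s) x_j, \eta \rangle = M(z_0) \langle x_j, \eta \rangle = 0
\]
for every $M \in \cM_d$ and every $j$. Now Proposition~\ref{prop:matrix_a11} asserts that $X$ is $M_k(\cM_d) \otimes I_s$-cyclic in $(H^2_d)^{\oplus ks}$; unpacking column by column, given any $y \in (H^2_d)^{\oplus s}$ and any $1 \leq \ell \leq k$, the matrix with $y$ in the $\ell$-th column and zeros elsewhere is a limit of $(F^{(n)} \otimes I_s) X$ for suitable $F^{(n)} \in M_k(\cM_d)$, so reading off the $\ell$-th column yields $\sum_{j=1}^{k} (F^{(n)}_{\ell j} \otimes I_s) x_j \to y$. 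Therefore the closed linear span of $\{(M \otimes I_s) x_j : M \in \cM_d,\ 1 \leq j \leq k\}$ equals $(H^2_d)^{\oplus s}$; combining with the displayed vanishing forces $\eta$ to be orthogonal to all of $(H^2_d)^{\oplus s}$, so $\eta = 0$. Since $k_d(\cdot,\, z_0) \neq 0$ and $c \neq 0$, this is absurd, and the rank must be $s$ throughout $\B_d$.

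The main obstacle I anticipate is the bookkeeping that converts $M_k(\cM_d) \otimes I_s$-cyclicity of the single vector $X$ into $\cM_d$-cyclicity of the column collection $\{x_j\}$; once that identification is in place, the reproducing-kernel eigenvector calculation closes the argument automatically.
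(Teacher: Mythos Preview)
Your argument is correct and is essentially the paper's own proof, spelled out in full. The paper assumes $X(z_0)$ fails to be surjective, picks a nonzero $w\in\C^s$ orthogonal to every column value $x_j(z_0)$, and then asserts that this contradicts the fact that the $x_j$ generate $(H^2_d)^{\oplus s}$; you supply the mechanism the paper leaves implicit, namely that $\eta=k_d(\cdot,z_0)\otimes c$ is a joint eigenvector for $(M\otimes I_s)^*$, so orthogonality to each $x_j$ propagates to the whole $\cM_d$-module they generate. Your unpacking of $M_k(\cM_d)\otimes I_s$-cyclicity into $\cM_d$-cyclicity of the column family $\{x_j\}$ matches exactly what is established in the proof of Proposition~\ref{prop:matrix_a11} (where it is noted that $u_1,\dots,u_k$ generate $(\cF_d^2)^{\oplus s}$ as an $\cL_d$-module), so there is no gap there.
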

\begin{proof}
We need to show that for $z \in \B_d$, the matrix $X(z)$ is surjective. Then there exists $w \in \C^s$ non-zero, such that $\langle x_j(z), w \rangle = 0$ for all $1 \leq j \leq k$, where $x_j$ are the columns of $X$. That, however, contradicts the fact that the $x_j$ generate $\left( H^2_d \right)^{\oplus s}$ as an $M_s(\cM_d)$-module.
\end{proof}

\begin{lem} \label{lem:cyclic_mat_mod}
Let $X \in M_{k,s}(H^2_d)$ be a $M_k(\cM_d) \otimes I_s$-cyclic vector and $H^2_{d,X} = \overline{\left(M_k(\fA) \otimes I_s \right) X}$. If $\widetilde{X} = P_{\cC^{\perp}} X$, then $$H^2_{d,X} = \left(M_k(A/\fc) \otimes I_s \right) \widetilde{X} \oplus M_{k,s}(\fC).$$ Furthermore, the map $\widetilde{X} \colon \left(\cP_d/\fc\right)^{\oplus k} \to \left(\cP_d/\fc\right)^{\oplus s}$ is surjective.
\end{lem}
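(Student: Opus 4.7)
The plan is to mimic the scalar argument preceding Proposition~\ref{prop:cyclic}, using that Lemma~\ref{lem:DA compression} is a complete isometry and therefore amplifies to $M_k$. The Hilbert space decomposition $M_{k,s}(H^2_d) = M_{k,s}(\cC^{\perp}) \oplus M_{k,s}(\cC)$ provides the skeleton: I will show that each direct summand in the claimed formula equals the projection of $H^2_{d,X}$ onto the corresponding factor, and that both are automatically closed.

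For the summand $M_{k,s}(\cC)$, I would prove the sharper equality $\overline{(M_k(\fc) \otimes I_s) X} = M_{k,s}(\cC)$. The inclusion $\subset$ is immediate from $\fc H^2_d \subset \cC$, and the left-hand side lies in $H^2_{d,X}$ because $\fc \subset \fA$. For the reverse, fix $f \in \fc$, $g \in H^2_d$, and indices $i, l, m$; by $M_k(\cM_d) \otimes I_s$-cyclicity of $X$, approximate the elementary matrix $g \, E_{l m}$ by $F_{\alpha} X$ with $F_{\alpha} \in M_k(\cM_d)$. Then $(f E_{il}) F_{\alpha} \in M_k(\fc)$ and $(f E_{il}) F_{\alpha} X \to (fg) \, E_{im}$, so norm density of $\fc H^2_d$ in $\cC$, together with the span of the elementary matrices, yields all of $M_{k,s}(\cC)$.

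For the summand $(M_k(A/\fc) \otimes I_s) \widetilde{X}$, I would compress $(M_k(\fA) \otimes I_s) X$ to $M_{k,s}(\cC^{\perp})$. Invariance of $\cC$ under $\cM_d$ gives $P_{\cC^{\perp}}(FX) = P_{\cC^{\perp}}(F \widetilde{X})$, and the matrix amplification of Lemma~\ref{lem:DA compression} identifies this with the action of $[F] \in M_k(\fA/\fc) \cong M_k(A/\fc)$ on $\widetilde{X} \in M_{k,s}(\cC^{\perp})$. Finite-dimensionality of $A/\fc$ forces $(M_k(A/\fc) \otimes I_s) \widetilde{X}$ to be automatically closed, and the orthogonality of $\cC$ with $\cC^{\perp}$ assembles the two pieces into the stated orthogonal direct sum.

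The final surjectivity claim is the same density statement recast algebraically. Applying $P_{\cC^{\perp}}$ to $\overline{(M_k(\cM_d) \otimes I_s) X} = M_{k,s}(H^2_d)$ yields $M_k(\cM_d/\fc) \, \widetilde{X} = M_{k,s}(\cC^{\perp})$, with closure free since the target is finite-dimensional. Weak-$*$ density of $\cP_d$ in $\cM_d$ combined with finite-dimensionality of $\cM_d/\fc$ forces $\cP_d/\fc = \cM_d/\fc$, and under the canonical identification $\cC^{\perp} \cong \cP_d/\fc$ as $\cP_d/\fc$-modules (with cyclic vector $P_{\cC^{\perp}} 1$) this becomes $M_k(\cP_d/\fc) \, \widetilde{X} = M_{k,s}(\cP_d/\fc)$; reading the equality row by row is precisely surjectivity of $\widetilde{X} \colon (\cP_d/\fc)^{\oplus k} \to (\cP_d/\fc)^{\oplus s}$. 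The main subtlety I anticipate is the bookkeeping needed to identify the operator-theoretic compression of $M_k(\cM_d)$ on $M_{k,s}(\cC^{\perp})$ with the purely algebraic action of $M_k(\cP_d/\fc)$ on itself, and this identification is exactly what the complete-isometry content of Lemma~\ref{lem:DA compression} provides.
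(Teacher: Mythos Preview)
Your argument is correct and, for the direct-sum decomposition, is exactly the paper's one-line computation $\overline{(M_k(\overline{\fc}^{w*})\otimes I_s)X} = \overline{(M_k(\overline{\fc}^{w*})\otimes I_s)(M_k(\cM_d)\otimes I_s)X} = M_{k,s}(\cC)$, only written out in more detail; the $\cC^\perp$-summand and the assembly into an orthogonal sum are left implicit in the paper and you have simply spelled them out.

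Where you genuinely diverge is in the surjectivity of $\widetilde{X}$. The paper invokes Lemma~\ref{lem:inj} (the pointwise rank of $X(z)$ is $s$ on $\B_d$) and then Nakayama's lemma over the Artinian ring $\cP_d/\fc$: surjectivity modulo each maximal ideal forces surjectivity. You instead compress the cyclicity hypothesis itself to $\cC^\perp$, use finite-dimensionality to drop the closure, and read off $M_k(\cP_d/\fc)\,\widetilde{X}=M_{k,s}(\cP_d/\fc)$ directly. Your route is more self-contained and avoids both the pointwise lemma and Nakayama, at the price of having to justify the identifications $\cM_d/\overline{\fc}^{w*}\cong\cP_d/\fc$ and $\cC^\perp\cong\cP_d/\fc$ (which you do correctly, via weak-$*$ density plus finite dimension, in the spirit of Lemma~\ref{lem:no_more_polys_in_closure}). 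The paper's route, on the other hand, ties the statement to the geometric picture of $X$ as a family of full-rank matrices over the singular locus, which is what feeds into the vector-bundle interpretation in the remark following Theorem~\ref{thm:matrix_family}.
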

\begin{proof}
To see the first part of the lemma one simply notes that 
\[
\overline{\left(M_k(\overline{\fc}^{w*}) \otimes I_s\right) X} = \overline{\left(M_k(\overline{\fc}^{w*}) \otimes I_s\right) \left(M_k(\cM_d) \otimes I_s\right) X} = M_{k,s}(\fC).
\]
The second follows from Lemma \ref{lem:inj} and the Nakayama lemma \cite[Corollary 4.8]{Eis95}.
\end{proof}

Now given $X_1, X_2 \in M_{k,s}(H^2_d)$ that are $M_k(\cM_d) \otimes I_s$-cyclic, we ask when is $H^2_{d,X_1} = H^2_{d,X_2}$? This is if and only if there exists $F,G \in M_k(A/\fc)$, such that $\widetilde{X_1} = \widetilde{X_2} F^T$ and $\widetilde{X_2} = \widetilde{X_1} G^T$. In particular, this is true if we can find $F \in \GL_k(A/\fc)$, such that $\widetilde{X_1} = \widetilde{X_2} F^T$. However, if $s < k$, then it need not be the case. 

Set $\cQ_s = \{ Z \in \Hom(\left(\cP_d/\fc\right)^{\oplus k},\left(\cP_d/\fc\right)^{\oplus s} \mid \rank(Z) = s\}$. Then from \cite[Theorem 7.2]{DavHam11} we have that

\begin{thm} \label{thm:matrix_family}
Let $\fA \subset \cM_d$ be a weak-$*$ closure of a subring $A \subset \cP_d$ that arises from a parametrization of a singular rational variety with isolated singular points. Then for every $k \in \N$, the algebra $M_k(\fA)$ admits a Nevanlinna-Pick family parametrized by the space $\sqcup_{s < k} \cQ_s/\left(\GL_k(A/\fc) \otimes I_s\right)$.
\end{thm}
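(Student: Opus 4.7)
The plan is to combine Proposition~\ref{prop:matrix_a11} (with Corollary~\ref{cor:vec_to_mat}), Lemma~\ref{lem:cyclic_mat_mod}, and the Nevanlinna--Pick family machinery of \cite[Theorem~7.2]{DavHam11}. After fixing a matricial interpolation datum at finitely many points of $\B_d$, a Sarason-type duality reduces solvability to a norm estimate for a weak-$*$ continuous functional $\varphi$ on $M_k(\fA)$ modulo the finite-codimensional ideal of matrix-valued multipliers vanishing at the nodes. Proposition~\ref{prop:matrix_a11} together with Corollary~\ref{cor:vec_to_mat} represents each such $\varphi$ with $\|\varphi\|<1$ in the form $\varphi(F) = \tr(X F^T Y^*)$ for some $s \leq k$ and some $X,Y \in M_{k,s}(H^2_d)$ of norm less than one, with $X$ cyclic for $M_k(\cM_d) \otimes I_s$; transcribing the argument of \cite[Theorem~7.2]{DavHam11} then shows that the Pick condition is equivalent to positivity of a Pick matrix computed in every cyclic $M_k(\fA) \otimes I_s$-submodule of $M_{k,s}(H^2_d)$.

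Next I would parametrize these cyclic submodules. By Lemma~\ref{lem:DA compression} and Proposition~\ref{prop:cyclic}, cyclicity passes cleanly to the compression by $P_{\cC^{\perp}}$, and Lemma~\ref{lem:cyclic_mat_mod} then exhibits each cyclic submodule as $(M_k(A/\fc) \otimes I_s)\widetilde{X} \oplus M_{k,s}(\fC)$, determined by the surjective $A/\fc$-homomorphism $\widetilde{X} \in \cQ_s$. In the converse direction, any element of $\cQ_s$ is realized by lifting through units of $\cP_d/\fc$, using Lemma~\ref{lem:no_more_polys_in_closure} and the matrix version of the argument behind Theorem~\ref{thm:NP-Picard}, so that the correspondence between cyclic modules and elements of $\cQ_s$ is surjective.

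Finally I would impose the equivalence relation identified in the discussion preceding the theorem: $H^2_{d,X_1} = H^2_{d,X_2}$, with both compressions in $\cQ_s$, if and only if $\widetilde{X}_1 = \widetilde{X}_2 F^T$ for some $F \in \GL_k(A/\fc)$. Unwinding the equality of cyclic modules yields matrices $F,G \in M_k(A/\fc)$ implementing mutual inclusions, and Nakayama's lemma applied to the Artinian ring $A/\fc$ (a finite product of local Artinian rings) promotes $F$ to an element of $\GL_k(A/\fc)$. Quotienting $\cQ_s$ by the induced right action of $\GL_k(A/\fc) \otimes I_s$ and taking the disjoint union over $s < k$ yields the asserted parameter space.

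The main obstacle is the promotion step in the final paragraph: starting from the raw mutual-inclusion data produced by Lemma~\ref{lem:cyclic_mat_mod}, one must show that a single transition $F$ can be chosen genuinely invertible over $A/\fc$ rather than merely implementing a one-sided inclusion or being invertible only on some fibre. This is exactly where the Artinian, semilocal structure of $A/\fc$ is essential, and it is what justifies the appearance of $\GL_k(A/\fc)$ in the quotient defining the parameter space rather than a larger monoid action.
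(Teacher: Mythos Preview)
Your overall strategy matches the paper's: invoke \cite[Theorem~7.2]{DavHam11} via Proposition~\ref{prop:matrix_a11} and Corollary~\ref{cor:vec_to_mat}, then use Lemma~\ref{lem:cyclic_mat_mod} to translate cyclic $M_k(\fA)\otimes I_s$-submodules into surjections $\widetilde X\in\cQ_s$, and finally pass to the quotient by $\GL_k(A/\fc)\otimes I_s$. Indeed the paper offers no separate proof beyond pointing to these ingredients.

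The problem is your final paragraph. You set yourself the task of showing that $H^2_{d,X_1}=H^2_{d,X_2}$ \emph{iff} $\widetilde X_1=\widetilde X_2 F^T$ with $F\in\GL_k(A/\fc)$, and you flag the ``only if'' direction as the main obstacle. But the paper explicitly says this direction can fail: from $\widetilde X_1=\widetilde X_2 F^T$ and $\widetilde X_2=\widetilde X_1 G^T$ one only gets $\widetilde X_1(I-G^TF^T)=0$, and since $\widetilde X_1$ has rank $s<k$ its kernel is nontrivial, so no amount of Nakayama over the Artinian ring $A/\fc$ will force $F$ to be invertible. Your proposed promotion step does not work.

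Fortunately it is also unnecessary. A Nevanlinna--Pick family is merely a sufficient collection of kernels; nothing in the definition requires the indexing to be injective. One direction---that $\GL_k(A/\fc)$-equivalent $\widetilde X$'s yield the same cyclic module and hence the same kernel---is immediate, and that is all that is needed to descend from $\cQ_s$ to $\cQ_s/(\GL_k(A/\fc)\otimes I_s)$. The resulting parametrization may well be redundant when $s<k$, and the paper does not claim otherwise. So drop the promotion step entirely: observe only that the $\GL_k(A/\fc)$-action preserves the associated kernel, and conclude.
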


\begin{rem}
Let us fix $k \in \N$ and $s \leq k$. Let $Z \in \cQ_s$. Consider the columns of $Z^T$ as elements of $\left(\cP_d/\fc\right)^{\oplus k}$ and let $M_Z$ be the $A/\fc$-module generated by these elements. By the assumption $Z$ is surjective and hence $\cP/\fc \otimes_{A/\fc} M_Z$ is a free $\cP_d/\fc$-module of rank $s$. Hence this data corresponds to a rank $s$ vector bundle on the spectrum of $A$ as in \cite{BDG01}. However, we might get isomorphic vector bundles by considering different submodules of $\left(\cP_d/\fc\right)^{\oplus s}$. Therefore, in a sense, the Nevanlinna-Pick family is parametrized by all vector bundles on the spectrum of $A$ of rank less than or equal to $k$.
\end{rem}

\section{Property $\A_1(1)$ for matrices} \label{sec:a11_mat}

In this section we study weak-$*$ closed operator algebras $\fA \subset B(\cH)$, such that $M_k(\fA)$ has the proeprty $\A_1(1)$ acting on $\cH^{\oplus k}$. First we provide a generalization of \cite[Corollary 3.5]{BFP83} with bounds on the condition number of the similarity.

\begin{thm} \label{thm:a_11_for_mat}
Let $\fA \subset B(\cH)$ be a weak-$*$ closed unital operator algebra. Assume that $M_k(\fA)$ has $\A_1(1)$ acting on $\cH^{\oplus k}$. Let  $\varphi \colon \fA \to M_k$ be a weak-$*$ continuous completely contractive unital homorphism. Then for $\epsilon > 0$, there exists a $k$-dimensional subspace $\cK_{\epsilon} \subset \cH$ that is semi-invariant for $\fA$, and an invertible linear map $S_{\epsilon} \colon \C^k \to \cK_{\epsilon}$, such that $\varphi(a) = S_{\epsilon}^{-1} P_{\cK_{\epsilon}} a|_{\cK_{\epsilon}} S_{\epsilon}$ for every $a \in \fA$, and $\lim_{\epsilon \to 0+} \|S_{\epsilon}\| \|S_{\epsilon}^{-1}\| = 1$.
\end{thm}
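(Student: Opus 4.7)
The plan is to encode $\varphi$ as a scalar linear functional on $M_k(\fA)$, invoke the hypothesis, and interpret the resulting vectors as defining the required subspace. Set $\xi_0 = \sum_{i=1}^k e_i \otimes e_i \in \C^k \otimes \C^k$, so $\|\xi_0\|^2 = k$, and let $\varphi_k \colon M_k(\fA) \to M_{k^2}$ denote the $k$-fold amplification, which is contractive since $\varphi$ is completely contractive. Define
\[
\Phi(F) := \langle \varphi_k(F) \xi_0, \xi_0\rangle = \sum_{i,j=1}^k \varphi(F_{ij})_{ij}
\]
on $M_k(\fA)$. Then $\|\Phi\| = k$, with the bound attained at $F = I$. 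For $\epsilon > 0$ choose $\delta = \delta(\epsilon) > 0$ small and apply the hypothesis to $(1-\delta)k^{-1}\Phi$, whose norm is at most $1-\delta$. This produces $X = (\xi_1, \ldots, \xi_k)^T$, $Y = (\eta_1, \ldots, \eta_k)^T \in \cH^{\oplus k}$ with $\|X\|, \|Y\| < 1$. After rescaling $\tilde\eta_p := k(1-\delta)^{-1}\eta_p$, evaluating $\Phi$ at $F = a E_{pq}$ gives the reproducing identity
\[
\varphi(a)_{pq} = \langle a\xi_q, \tilde\eta_p\rangle \qquad (a \in \fA,\ 1 \leq p, q \leq k).
\]

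Taking $a = 1$ yields the biorthogonality $\langle \xi_q, \tilde\eta_p\rangle = \delta_{pq}$, whence the $\xi_q$'s are linearly independent; set $\cK_\epsilon := \operatorname{span}\{\xi_1, \ldots, \xi_k\}$, a $k$-dimensional subspace, and define $S_\epsilon\colon \C^k \to \cK_\epsilon$ by $e_j \mapsto \xi_j$. Writing $\Xi, \tilde Y \colon \C^k \to \cH$ for the columnwise maps associated with $(\xi_j)$ and $(\tilde\eta_p)$, the reproducing identity reads $\varphi(a) = \tilde Y^* a \Xi$. The operator $P := \Xi \tilde Y^*$ is an idempotent on $\cH$ (because $\tilde Y^* \Xi = \varphi(1) = I$) with range $\cK_\epsilon$, and a direct computation shows that the target identity $\varphi(a) = S_\epsilon^{-1} P_{\cK_\epsilon} a|_{\cK_\epsilon} S_\epsilon$ is equivalent to $P = P_{\cK_\epsilon}$, equivalently $\tilde\eta_p \in \cK_\epsilon$ for every $p$. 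Moreover, multiplicativity of $\varphi$ already yields $\tilde Y^* a (I - P) b \Xi = 0$ for all $a, b \in \fA$, which is the Sarason semi-invariance condition for $\cK_\epsilon$ relative to the \emph{non-orthogonal} projection $P$.

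The main difficulty is therefore to upgrade this to semi-invariance relative to the orthogonal projection $P_{\cK_\epsilon}$, i.e., to force each $\tilde\eta_p$ to lie in $\cK_\epsilon$. My preferred approach is to invoke the cyclic strengthening of the hypothesis (in the spirit of Proposition~\ref{prop:matrix_a11} and its precursor \cite[Theorem~3.1]{DavHam11}), which allows $X$ to be taken $M_k(\fA)$-cyclic in $\cH^{\oplus k}$, forcing $\overline{\fA \cK_\epsilon} = \cH$. Combined with the identity $\tilde Y^* a (I - P) b \Xi = 0$, careful propagation over $a, b \in \fA$ pushes the non-orthogonal complement of $\cK_\epsilon$ cut out by $P$ to the trivial subspace, forcing $\tilde\eta_p = P\tilde\eta_p \in \cK_\epsilon$. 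If the cyclic strengthening is unavailable, one can instead follow a BFP-style iteration in the style of \cite[Cor.~3.5]{BFP83}: successively correct $\tilde\eta_p$ by additions drawn from $\overline{\fA \cK_\epsilon} \ominus \cK_\epsilon$ and supplied by fresh applications of $\A_1(1)$ for $M_k(\fA)$ to a defect functional, showing geometric convergence. I expect this step to be the most technical part of the proof.

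Once $\tilde\eta_p \in \cK_\epsilon$ is secured, the $\tilde\eta_p$'s form the dual basis of the $\xi_q$'s inside $\cK_\epsilon$, so their Gram matrix is $G_\epsilon^{-1}$, where $G_\epsilon := \Xi^* \Xi$ is the Gram matrix of the $\xi$'s. Then $\|S_\epsilon\|\|S_\epsilon^{-1}\| = \sqrt{\lambda_{\max}(G_\epsilon)/\lambda_{\min}(G_\epsilon)}$. The optimal $\A_1(1)$ realization (balancing $\|X\| = \|Y\|$) yields $\operatorname{tr} G_\epsilon = \|X\|^2 \to 1$ and $\operatorname{tr} G_\epsilon^{-1} = \|\tilde Y\|^2 \to k^2$ as $\delta \to 0$. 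The Cauchy-Schwarz inequality $\operatorname{tr}(G_\epsilon) \cdot \operatorname{tr}(G_\epsilon^{-1}) \geq k^2$ (saturated precisely when $G_\epsilon$ is a scalar multiple of $I_k$) is then asymptotically attained, forcing $G_\epsilon \to k^{-1} I_k$, and hence $\lim_{\epsilon \to 0^+} \|S_\epsilon\|\|S_\epsilon^{-1}\| = 1$.
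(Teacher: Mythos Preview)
Your encoding of $\varphi$ as a functional on $M_k(\fA)$ and the resulting reproducing identity $\varphi(a)_{pq}=\langle a\xi_q,\tilde\eta_p\rangle$ match the paper exactly. The gap is in what you do next: you set $\cK_\epsilon=\operatorname{span}\{\xi_1,\dots,\xi_k\}$ and then try to force $\tilde\eta_p\in\cK_\epsilon$ (equivalently $P=P_{\cK_\epsilon}$). This is simply false in general, and neither of your proposed repairs succeeds. Take $k=1$, $\fA=H^\infty$ on $H^2$, and $\varphi(f)=f(0)$. A perfectly legitimate (and cyclic) $\A_1(1)$ realization is $\xi=1+\alpha z$ with small $\alpha\ne 0$ and $\eta=\sum_{n\ge0}(-\bar\alpha)^n z^n$; then $\|\xi\|\,\|\eta\|=\sqrt{(1+|\alpha|^2)/(1-|\alpha|^2)}$ is as close to $1$ as you like. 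Here $\cK_\epsilon=\C(1+\alpha z)$ is \emph{not} semi-invariant for $H^\infty$ (one-dimensional semi-invariant subspaces of $H^2$ are exactly $\C\,\theta k_\lambda$ with $\theta$ inner, and $1+\alpha z$ is outer), and $\tilde\eta=\eta\notin\cK_\epsilon$. Your ``cyclic strengthening'' argument only yields $\ker P\subset\cN_{*,\epsilon}^\perp$, hence $\cN_{*,\epsilon}=\operatorname{span}\{\tilde\eta_r\}$; it says nothing about $\operatorname{span}\{\tilde\eta_r\}$ coinciding with $\operatorname{span}\{\xi_j\}$, and the example shows it need not.

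The paper avoids this by reversing the order of the construction. It first builds an honestly semi-invariant subspace from the data,
\[
\cN_\epsilon=\overline{\textstyle\sum_j\fA\xi_j},\quad \cN_{*,\epsilon}=\overline{\textstyle\sum_j\fA^*\eta_j},\quad \cG_\epsilon=\cN_\epsilon\cap\cN_{*,\epsilon}^{\perp},\quad \cK_\epsilon=\cN_\epsilon\ominus\cG_\epsilon,
\]
and then replaces each $\xi_j$ by its orthogonal projection onto $\cK_\epsilon$. This replacement does not disturb the pairings $\langle a\xi_j,\eta_i\rangle$ (because the discarded piece lies in $\cN_{*,\epsilon}^{\perp}$), so the reproducing identity persists; multiplicativity of $\varphi$ then gives
\[
P_{\cK_\epsilon}a\xi_j=k\sum_{r}\langle a\xi_j,\eta_r\rangle\,\xi_r,
\]
which simultaneously shows that the projected $\xi_j$'s span $\cK_\epsilon$ and that $S_\epsilon$ intertwines $\varphi$ with the compression. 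In the example above this procedure replaces $1+\alpha z$ by its projection onto $\cN_{*,\epsilon}=\C\eta$, recovering the correct semi-invariant line. Your condition-number estimate via $\operatorname{tr}G_\epsilon\cdot\operatorname{tr}G_\epsilon^{-1}\to k^2$ is fine once this correction is in place (and note the projection only decreases $\|\xi\|$, so the norm bounds survive); the paper reaches the same conclusion by the near-equality case of Cauchy--Schwarz for $\langle\xi,\eta\rangle$.
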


\begin{proof}
Let $e_1,\ldots, e_k$ be an orthonormal basis for $\C^k$ and let $\varphi_{ij}(a) = \langle \varphi(a) e_j, e_i \rangle$ be the matrix coefficients of $\varphi$. Since $\varphi$ is completely contractive and unital it extends to a unital completely positive (ucp) map on $\fA + \fA^*$. We can now construct a state on $M_k(\fA)$ from $\varphi$. Let $T = \sum_{i,j = 1}^k E_{ij} \otimes a_{ij} \in M_k(\fA)$, then
\begin{align*}
s(T) &= \frac{1}{k} \sum_{i,j = 1}^k \varphi_{ij}(a_{ij}) \\&
= \frac{1}{k} \sum_{i,j =1}^k \tr \left(\varphi(a_{ij}) E_{ij}^* \right)  \\&
= \langle \left(\operatorname{id}_{M_k} \otimes \varphi\right)(T) I_k, I_k \rangle.
\end{align*}
In the latter equality, we view $M_k$ as a Hilbert space with the normalized Hilbert-Schmidt product.

Fix $\epsilon > 0$. Since $s$ is a state and $M_k(\fA)$ has $\A_1(1)$, there exist $\xi, \eta \in \cH^{\oplus k}$, such that $s(T) = \langle T \xi, \eta \rangle$ for every $T \in M_k(\fA)$ and $\|\xi\|, \|\eta\| < \sqrt{1 + \epsilon}$. Write $\xi = \left( \xi_1,\ldots,\xi_k \right)^T$ and $\eta = \left( \eta_1,\ldots, \eta_k \right)^T$. Note that for every $1 \leq i,j \leq k$, 
\[
\frac{1}{k} \varphi_{ij}(a) = s(E_{ij} \otimes a) = \langle \left(E_{ij} \otimes a\right) \xi, \eta \rangle = \langle a \xi_j, \eta_i \rangle.
\]
Following \cite{BFP83} we define the following subspaces of $\cH$
\begin{alignat*}{2}
& \cN_{\epsilon} = \overline{\sum\!\strut_{j=1}^k \fA \xi_j}, &\qquad& \cN_{*,\epsilon} = \overline{\sum\!\strut_{j=1}^k \fA^* \eta_j}, \\[.5ex]
& \cG_{\epsilon} = \cN_{\epsilon} \cap \cN_{*,\epsilon}, &\qquad& \cK_{\epsilon} = \cN_{\epsilon} \ominus \cG_{\epsilon}.
\end{alignat*}
Clearly $\cK_{\epsilon}$ is a semi-invariant subspace of $\fA$. Now write $\xi_j = \xi_{j1} + \xi_{j2}$, with $\xi_{j1} \in \cK_{\epsilon}$ and $\xi_{j2} \in \cG_{\epsilon}$. Fix $j$, then for every $a \in \fA$ and every $1 \leq i \leq k$ we have
\[
\frac{1}{k}\varphi_{ij}(a) = \langle a \xi_j, \eta_i \rangle = \langle \xi_j, a^* \eta_i \rangle = \langle a \xi_{j1}, \eta_i \rangle.
\]
Hence we can assume that $\xi_1,\ldots,\xi_k \in \cK_{\epsilon}$. Since $\xi_{1j}$ is a projection of $\xi_j$, we will not increase the norm by replacing $\xi_j$ with $\xi_{1j}$. From the fact that $\varphi$ is a homomorphism we have that for every $a, b \in \fA$ and every $1 \leq i,j \leq k$
\begin{align*}
\langle a \xi_j, b^* \eta_i \rangle &= \langle ba \xi_j, \eta_i \rangle = \frac{1}{k} \varphi_{ij}(ba) 
= \frac{1}{k} \sum_{r=1}^k \varphi_{ir}(b) \varphi_{rj}(a) \\&
= k \sum_{r=1}^k \langle \xi_r, b^* \eta_i \rangle \langle a \xi_j,  \eta_r \rangle = \langle k \sum_{r=1}^k \langle a \xi_j, \eta_r \rangle \xi_r, b^* \eta_i \rangle
\end{align*}
This immediately implies that 
\[
P_{\cK_{\epsilon}} a \xi_j = k \sum_{r=1}^k \langle a \xi_j, \eta_r \rangle \xi_r .
\]
Additionally, since the elements of the form $\sum_{r=1}^k P_{\cK_{\epsilon}} a_r \xi_r$ are dense in $\cK_{\epsilon}$, we obtain that $\cK_{\epsilon}$ is spanned by $\xi_1,\ldots,\xi_k$. Since $\varphi$ is unital we have that $\langle \xi_j, \eta_i \rangle = \frac{1}{k} \delta_{ij}$. Conclude that $\xi_1,\ldots,\xi_k$ is a basis for $\cK_{\epsilon}$.

The last consideration can be slightly refined, namely
\begin{equation} \label{eq:almost_cs}
1 = s(I) = \langle \xi, \eta \rangle = \sum_{r=1}^k \langle \xi_r, \eta_r \rangle \leq \|\xi\| \|\eta\| \leq 1 + \epsilon.
\end{equation}

Now consider the map $S_{\epsilon} \colon \C^k \to \cK$ given by 
\[
 S_\epsilon e_i =  \xi_i \quad\text{for }1 \le i \le n .
\]
It is immediate that $S_{\epsilon}$ is bijective. 
Furthermore, $S_{\epsilon}$ intertwines $\varphi(a)$ and $P_{\cK_{\epsilon}} a|_{\cK_{\epsilon}}$. To see this we simply observe that
\begin{align*}
S_{\epsilon} \varphi(a) e_i &= \sum_{r=1}^k \varphi_{ri}(a) S_{\epsilon} e_r \\&
= k \sum_{r=1}^k \langle a \xi_i, \eta_r \rangle \xi_r 
= P_{\cK_{\epsilon}} a \xi_i \\&
= P_{\cK_{\epsilon}} a S_{\epsilon} e_i .
\end{align*}
Hence every homomorphism is similar to the compression of $\fA$ to a semi-invariant subspace. It remains to estimate the condition number. Note that as $\epsilon$ tends to $0$ in \eqref{eq:almost_cs} we have that $\eta$ tends to $\xi$. In particular, the norm of each $\xi_j$ tends to $\frac{1}{\sqrt{k}}$ and the inner products $\langle \xi_i, \xi_j\rangle$ for $i \neq j$ tend to $0$. Therefore $\sqrt k S_{\epsilon}$ is close to a unitary, and thus the condition number is close to $1$.
\end{proof}

\begin{cor}
Let $\fA \subset B(\cH)$ be a unital weak-$*$ closed operator algebra. Assume that $M_k(\fA)$ has $\A(1)$ acting on $\cH^{\oplus k}$. Let $\varphi \colon \fA \to M_k$ be a weak-$*$ continuous unital homomorphism.  Then for every $\epsilon > 0$, there exists a semi-invariant subspace $\cK_{\epsilon} \subset \cH$ and an invertible linear map $T_{\epsilon} \colon \C^k \to \cK_{\epsilon}$, such that for every $a \in \fA$, $\varphi(a) = T_{\epsilon}^{-1} P_{\cK_{\epsilon}} a|_{\cK_{\epsilon}}$ and $\lim_{\epsilon \to 0+} \|T_{\epsilon}\| \|T_{\epsilon}^{-1}\| = \|\varphi\|_{cb}$.
\end{cor}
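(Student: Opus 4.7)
The plan is to reduce the corollary to Theorem~\ref{thm:a_11_for_mat} by first turning $\varphi$ into a completely contractive homomorphism via a similarity inside $M_k$. Since the conclusion of the corollary forces $\|\varphi\|_{cb} < \infty$, I would invoke Paulsen's similarity theorem to obtain an invertible $R \in M_k$ with $\|R\|\,\|R^{-1}\| = \|\varphi\|_{cb}$ such that $\rho := R\,\varphi(\cdot)\,R^{-1}$ is a unital completely contractive homomorphism from $\fA$ to $M_k$. Since conjugation by $R$ is a linear homeomorphism of the finite dimensional target $M_k$, $\rho$ inherits weak-$*$ continuity from $\varphi$.

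Next I would apply Theorem~\ref{thm:a_11_for_mat} to $\rho$: for each $\epsilon > 0$ this yields a semi-invariant subspace $\cK_\epsilon \subset \cH$ of dimension $k$ and an invertible $S_\epsilon \colon \C^k \to \cK_\epsilon$ implementing $\rho$ as the compression $\rho(a) = S_\epsilon^{-1}\,P_{\cK_\epsilon}\,a|_{\cK_\epsilon}\,S_\epsilon$, with $\|S_\epsilon\|\,\|S_\epsilon^{-1}\| \to 1$ as $\epsilon \to 0^+$. Setting $T_\epsilon := S_\epsilon R \colon \C^k \to \cK_\epsilon$ and unwinding the conjugations gives
\[
T_\epsilon^{-1}\,P_{\cK_\epsilon}\,a|_{\cK_\epsilon}\,T_\epsilon \;=\; R^{-1}\,\rho(a)\,R \;=\; \varphi(a),
\]
which is the intended intertwining (the stated formula in the corollary is evidently missing the trailing $T_\epsilon$).

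For the condition number, submultiplicativity immediately yields
\[
\|T_\epsilon\|\,\|T_\epsilon^{-1}\| \;\le\; \|S_\epsilon\|\,\|S_\epsilon^{-1}\|\cdot\|R\|\,\|R^{-1}\| \;\xrightarrow[\epsilon \to 0^+]{}\; \|\varphi\|_{cb}.
\]
For the reverse inequality I would invoke the standard fact that compression to a semi-invariant subspace is a unital completely contractive homomorphism. Conjugating this compression by $T_\epsilon$ recovers $\varphi$, so for any $n \ge 1$ and $[a_{ij}] \in M_n(\fA)$ one has $\|[\varphi(a_{ij})]\| \le \|T_\epsilon\|\,\|T_\epsilon^{-1}\|\cdot\|[a_{ij}]\|$, giving $\|\varphi\|_{cb} \le \|T_\epsilon\|\,\|T_\epsilon^{-1}\|$ for every $\epsilon > 0$. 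Combining the two bounds yields the claimed limit.

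The main obstacle --- more a prerequisite than a real difficulty --- is ensuring the applicability of Paulsen's similarity theorem in this setting, which is its classical form for completely bounded unital homomorphisms from a unital operator algebra into $B(\C^k) = M_k$. Once that is in hand, the proof is essentially a clean conjugation of Theorem~\ref{thm:a_11_for_mat} combined with the universal property of semi-invariant compressions.
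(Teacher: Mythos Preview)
Your proposal is correct and follows essentially the same route as the paper: conjugate $\varphi$ by a similarity from Paulsen's theorem to a completely contractive $\rho$ (the paper's $\psi$), apply Theorem~\ref{thm:a_11_for_mat}, and compose the similarities. The only cosmetic difference is that the paper explicitly invokes Smith's lemma to pass from bounded to completely bounded before applying Paulsen, whereas you treat complete boundedness as a standing prerequisite; otherwise the two arguments, including the two-sided estimate on $\|T_\epsilon\|\,\|T_\epsilon^{-1}\|$, coincide.
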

\begin{proof}
By Smith's lemma every bounded homomorphism $\varphi \colon \fA \to M_k$ is completely bounded and by a theorem of Paulsen it is similar to a completely contractive homomorphism. Furthermore, there is a similarity $S$, such that $\psi = S^{-1} \varphi S$ is completely contractive and $\|\varphi\|_{cb} = \|S\| \|S^{-1}\|$. Given $\epsilon > 0$, we apply Theorem \ref{thm:a_11_for_mat} to $\psi$ to find a semi-invariant subspace $\cK_{\epsilon} \subset \cH$ and an invertible linear map $S_{\epsilon} \colon \C^k \to \cK_{\epsilon}$, such that for every $a \in \fA$, $\psi(a) = S_{\epsilon}^{-1} P_{\cK_{\epsilon}} a|_{\cK_{\epsilon}} S_{\epsilon}$. Set $T_{\epsilon} = S_{\epsilon} S^{-1}$. It is clear now that $T_{\epsilon}$ is the similarity that we are after. To prove the last statement we note that 
\[
\|\varphi\|_{cb} \leq \|T_{\epsilon}\|\|T_{\epsilon}^{-1}\| \leq \|S\|\|S^{-1}\|\|S_{\epsilon}\|\|S_{\epsilon}^{-1}\| = \|\varphi\|_{cb} \|S_{\epsilon}\|\|S_{\epsilon}^{-1}\|.
\]
Now it remains to apply the fact that $\lim_{\epsilon \to 0+} \|S_{\epsilon}\|\|S_{\epsilon}^{-1}\| = 1$.
\end{proof}

\newcommand{\qand}{\quad\text{and}\quad}

The following lemma and corollary demonstrate that the methods employed in the previous section to obtain the property $\A_1(1)$ for $M_k(\cM_d)$ are necessary.

\begin{lem} \label{lem:must_be_close}
Suppose that $f\in H^2_2$ satisfies 
\[ \|f\|=1 \qand  \|z_1 f\|^2 > 1-\epsilon \qand \| z_2f\|^2 > 1-\epsilon \]
Then $|\langle f, 1 \rangle|^2 \ge 1-4\epsilon$ and $\operatorname{dist}(f,\C1) \le 2\sqrt\epsilon$.

\end{lem}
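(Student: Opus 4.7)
The plan is to exploit the homogeneous decomposition of the Drury-Arveson space. Write $f = \sum_{n \geq 0} f_n$ where $f_n$ is the component of $f$ in the space of homogeneous polynomials of degree $n$. Since $z_i f_n$ is homogeneous of degree $n+1$, the operators $M_{z_1}$ and $M_{z_2}$ preserve the grading (shifting by one), and so
\begin{equation*}
\|z_i f\|^2 = \sum_{n \geq 0} \|z_i f_n\|^2, \qquad i = 1,2.
\end{equation*}
The normalization becomes $\sum_n \|f_n\|^2 = 1$, and the hypothesis summed over $i$ gives $\sum_n (\|z_1 f_n\|^2 + \|z_2 f_n\|^2) > 2 - 2\epsilon$.

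The next step is the key calculation: for $f_n$ homogeneous of degree $n$,
\begin{equation*}
\|z_1 f_n\|^2 + \|z_2 f_n\|^2 = \frac{n+2}{n+1}\, \|f_n\|^2.
\end{equation*}
I would verify this on monomials using $\|z^\alpha\|^2_{H^2_2} = \alpha!/|\alpha|!$, which follows from the power-series expansion of the kernel $k_2(z,w) = 1/(1-\langle z,w\rangle)$. This gives $\|z_i z^\alpha\|^2 = \frac{\alpha_i + 1}{|\alpha|+1}\|z^\alpha\|^2$, and since $\{z^{\alpha+e_i}\}_\alpha$ are pairwise orthogonal, the identity extends by linearity to all homogeneous $f_n$.

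Combining the identity with the hypothesis and using $\frac{n+2}{n+1} = 1 + \frac{1}{n+1}$ yields
\begin{equation*}
1 + \sum_{n \geq 0} \frac{\|f_n\|^2}{n+1} = \|z_1 f\|^2 + \|z_2 f\|^2 > 2 - 2\epsilon,
\end{equation*}
so $\sum_n \|f_n\|^2/(n+1) > 1 - 2\epsilon$. Now split off the $n=0$ term and estimate the tail using $\frac{1}{n+1} \leq \frac{1}{2}$ for $n \geq 1$, which gives
\begin{equation*}
1 - 2\epsilon < \|f_0\|^2 + \tfrac{1}{2}\bigl(1 - \|f_0\|^2\bigr) = \tfrac{1}{2} + \tfrac{1}{2}\|f_0\|^2.
\end{equation*}
Hence $\|f_0\|^2 > 1 - 4\epsilon$. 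Since $f_0 = \langle f, 1\rangle \cdot 1$, this is exactly $|\langle f, 1\rangle|^2 \geq 1 - 4\epsilon$, and the Pythagorean identity
$\dist(f, \C 1)^2 = \|f\|^2 - |\langle f,1\rangle|^2 \leq 4\epsilon$ gives the stated distance bound. There is no real obstacle here; the only thing to get right is the homogeneous identity, and everything else is a clean two-line estimate.
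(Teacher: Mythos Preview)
Your proof is correct and is essentially the same argument as the paper's, just organized by homogeneous degree rather than monomial-by-monomial: the paper computes $\|z_1 f\|^2 + \|z_2 f\|^2 = 1 + \sum_{m,n} |a_{mn}|^2 \tfrac{m!\,n!}{(m+n)!}\,\tfrac{1}{m+n+1}$ directly from the monomial expansion, which is exactly your identity $\sum_n \tfrac{n+2}{n+1}\|f_n\|^2$ after grouping terms with $m+n$ fixed. The subsequent estimate (bounding $\tfrac{1}{n+1}\le \tfrac12$ for $n\ge 1$) and the Pythagorean conclusion are identical.
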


\begin{proof}
Write $f = \sum a_{mn} z_1^m z_2^n$. 
Note that the monomials are orthogonal and 
$\| z_1^m z_2^n \|^2 = \dfrac{m!\, n!}{(n+m)!}$. So we have
\[ 1 = \|f\|^2 = \sum |a_{mn}|^2 \frac{m!\, n!}{(n+m)!} . \]
Thus
\begin{align*}
 \| z_1f\|^2 + \| z_2f \|^2 &= \sum |a_{mn}|^2  \frac{m!\, n!}{(n+m)!} \Big( \frac{m+1}{m+n+1} + \frac{n+1}{m+n+1} \Big) \\&
 = 1 + \sum |a_{mn}|^2  \frac{m!\, n!}{(n+m)!}  \frac 1{m+n+1} \\ & \leq \frac{3}{2} + \frac{1}{2} |a_{00}|^2 .
\end{align*}
Hence we have that
\[
1 - 4 \epsilon \leq |a_{00}|^2 = |\langle f, 1 \rangle|^2 \leq 1.
\]
It follows that
\[
\operatorname{dist}(f,\C1)^2 = \|f\|^2 -  |\langle f, 1 \rangle|^2 \le 4\epsilon. \qedhere
\]
\end{proof}

\begin{cor}
The algebra $M_6(\cM_d)$ does not have $\A_1(1)$. 
\end{cor}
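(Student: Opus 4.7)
The plan is to assume $M_6(\cM_d)$ has $\A_1(1)$ acting on $(H^2_d)^{\oplus 6}$ and then use Theorem~\ref{thm:a_11_for_mat} to produce vectors in $H^2_d$ that violate Lemma~\ref{lem:must_be_close}. I focus on $d = 2$; the case $d \geq 2$ is identical, using the obvious extension of the lemma to $H^2_d$ (the coefficient estimate $2|\alpha| - \alpha_1 - \alpha_2 \geq 1$ for $|\alpha| \geq 1$ gives the same bound $\|z_1 f\|^2 + \|z_2 f\|^2 \leq \tfrac{3}{2}\|f\|^2 + \tfrac{1}{2}|\langle f,1\rangle|^2$).

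First I build a completely contractive, unital, weak-$*$ continuous homomorphism $\varphi \colon \cM_2 \to M_6$ whose structure contains two copies of a ``cyclic jet at the origin.'' The subspace $V = \spn\{1,z_1,z_2\} \subset H^2_2$ is $3$-dimensional and semi-invariant: its orthogonal complement $\overline{\spn\{z^\alpha : |\alpha| \geq 2\}}$ is $\cM_2$-invariant, since $M_f$ raises the minimum degree of Taylor terms. Hence the compression $\varphi_0(f) = P_V M_f|_V$ is a completely contractive weak-$*$ continuous unital homomorphism $\cM_2 \to M_3$, equal in the orthonormal basis $\{1,z_1,z_2\}$ to $f(0) I_3 + \partial_1 f(0) E_{21} + \partial_2 f(0) E_{31}$. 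Setting $\varphi = \varphi_0 \oplus \varphi_0$, the standard basis $\{e_1,\dots,e_6\}$ of $\C^6$ satisfies $\varphi(z_1) e_1 = e_2$, $\varphi(z_2) e_1 = e_3$, $\varphi(z_1) e_4 = e_5$, $\varphi(z_2) e_4 = e_6$, and $\varphi(z_j) e_i = 0$ for $i \in \{2,3,5,6\}$.

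Next I invoke Theorem~\ref{thm:a_11_for_mat}: for each $n \in \N$ there exist a $6$-dimensional semi-invariant subspace $\cK_n \subset H^2_2$ and an invertible $S_n \colon \C^6 \to \cK_n$ with $\varphi(a) = S_n^{-1} P_{\cK_n} a|_{\cK_n} S_n$ for all $a \in \cM_2$, and $\|S_n\|\|S_n^{-1}\| \to 1$. After rescaling so that $\|S_n\| = 1$ and $\|S_n^{-1}\| \to 1$, I set $\eta_i^{(n)} = S_n e_i$; the condition-number bound forces $S_n^* S_n \to I$ in operator norm, so the $\eta_i^{(n)}$ are asymptotically orthonormal. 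For $i \in \{1,4\}$ and $j \in \{1,2\}$, the identity $P_{\cK_n} M_{z_j} \eta_i^{(n)} = S_n \varphi(z_j) e_i$ gives $\|P_{\cK_n} M_{z_j} \eta_i^{(n)}\| \to 1$, while $\|M_{z_j} \eta_i^{(n)}\| \leq \|\eta_i^{(n)}\| \to 1$, so $\|M_{z_j} \eta_i^{(n)}\| \to 1$ for both $j = 1, 2$.

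Finally, Lemma~\ref{lem:must_be_close} applied to $\eta_i^{(n)}/\|\eta_i^{(n)}\|$ yields $\dist(\eta_i^{(n)}, \C \cdot 1) \to 0$ for $i \in \{1,4\}$. Passing to a subsequence, $\eta_1^{(n)} \to c_1 \cdot 1$ and $\eta_4^{(n)} \to c_4 \cdot 1$ with $|c_1| = |c_4| = 1$; but asymptotic orthogonality forces $0 = \lim_n \langle \eta_1^{(n)}, \eta_4^{(n)}\rangle = c_1 \overline{c_4}$, contradicting $|c_1 \overline{c_4}| = 1$. The main piece of bookkeeping will be rescaling the $S_n$ so that approximate orthonormality of the $\eta_i^{(n)}$ and the norm bounds feeding into Lemma~\ref{lem:must_be_close} hold simultaneously; once that is handled, the contradiction is immediate, and the essential conceptual point is that the representation $\varphi$ demands two independent ``constant-like'' cyclic vectors while $H^2_2$ supplies only the one-dimensional space $\C \cdot 1$.
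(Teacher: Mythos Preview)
Your proof is correct and follows essentially the same approach as the paper: both construct $\varphi = \varphi_0 \oplus \varphi_0$ with $\varphi_0$ the compression of $\cM_2$ to $\spn\{1,z_1,z_2\}$, invoke Theorem~\ref{thm:a_11_for_mat} to realize $\varphi$ as a near-unitary similarity of a compression, and then use Lemma~\ref{lem:must_be_close} to force the two cyclic vectors (your $\eta_1^{(n)}, \eta_4^{(n)}$; the paper's $f,g$) to both be close to $\C 1$, contradicting their near-orthogonality. The only cosmetic differences are that you phrase the argument as a limit along a sequence while the paper works with a fixed small $\epsilon$, and you handle $d>2$ by extending Lemma~\ref{lem:must_be_close} directly to $H^2_d$ whereas the paper appeals to the embedding $\cM_2 \hookrightarrow \cM_d$.
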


\begin{proof}
By Theorem \ref{thm:a_11_for_mat} we need to produce a weak-$*$ continuous completely contractive homomorphism $\varphi \colon \cM_d \to M_6$ which cannot be approximated by compressions to semi-invariant subspaces. We will prove this for $\cM_2$, and for every other $d > 2$, it follows by embedding $\cM_2$ into $\cM_d$.

Let us consider first the compression of $\cM_2$ to $\operatorname{Span}\{1,z_1,z_2\}$. This is a semi-invariant subspace and hence this is a representation that we will denote by $\pi$. It is clear that $\pi$ unital, completely contractive, and weak-$*$ continuous. Let $\varphi = \pi \oplus \pi$. In order to approximate $\varphi$ by compressions to semi-invariant subspaces, we need to be able to find for every $\epsilon > 0$, two unit vectors $f,g \in \cH_2^2$, such that $| \langle f , g \rangle| < \epsilon$ and the compression of $\cM_2$ onto $\operatorname{Span}\{f, z_1 f, z_2 f\}$ and onto $\operatorname{Span}\{g, z_1 g, z_2 g\}$ is $\epsilon$-similar to $\pi$. In particular, this implies that both $f$ and $g$ satisfy the assumptions of Lemma \ref{lem:must_be_close}. Multiplying $f$ and $g$ by unimodular scalars will not change the properties of these vectors. Hence we may assume that $f(0), g(0) > 0$.  However, this implies that 
\[
\sqrt{1 - 4 \epsilon} \leq \langle f , 1 \rangle, \, \langle g, 1 \rangle \leq 1.
\]
Therefore
\begin{align*}
|\langle f, g \rangle| &\ge \langle f , 1 \rangle  \langle g, 1 \rangle - \| P_{\C1}^\perp f\|\,\|P_{\C1}^\perp g\| \\&
\ge (1-4\epsilon) - (2\sqrt\epsilon)^2 = 1 - 8\epsilon .
\end{align*}
This contradicts the fact that $\langle f, g \rangle < \epsilon$ when $\epsilon < 0.1$.
\end{proof}

We suspect that $M_2(\cM_d)$ does not have $\A_1(1)$. We state this as an explicit problem:

\begin{quest}
Does $M_2(\cM_d)$  have $\A_1(1)$ for $d\ge2$?
\end{quest}


\end{document}